
\documentclass[journal,onecolumn,12pt]{article}

\usepackage{amssymb}
\usepackage{caption2}
\usepackage{amsmath}
\usepackage{multirow}
\usepackage{amsthm}
\usepackage{graphicx}
\usepackage{CJK}
\usepackage{enumerate}

\usepackage{tikz}
\usepackage{tikz}
\usetikzlibrary{positioning,chains,fit,shapes,calc}

\oddsidemargin  0pt \evensidemargin 0pt \marginparwidth 40pt
\marginparsep 10pt \topmargin -10pt \headsep 10pt \textheight
8.7in \textwidth 6.7in \textheight 7.8375in

\newtheorem{theorem}{Theorem}[section]
\newtheorem{lemma}[theorem]{Lemma}
\newtheorem{proposition}[theorem]{Proposition}

\newtheorem{conjecture}[theorem]{Conjecture}

\newtheorem{remark}[theorem]{Remark}

\begin{document}
\title{On the minimal degree condition of graphs implying some properties of subgraphs}

\author{Bingchen Qian$^{\text{a}},$ Chengfei Xie$^{\text{b}}$ and Gennian Ge$^{\text{b,}}$\thanks{
  The research of G. Ge was supported by the National Natural Science Foundation of China under Grant No. 11971325, National Key Research and Development Program of China under Grant  Nos. 2020YFA0712100  and  2018YFA0704703,  and Beijing Scholars Program.}\\
\footnotesize $^{\text{a}}$ School of Mathematical Sciences, Zhejiang University, Hangzhou 310027, Zhejiang, China\\
\footnotesize $^{\text{b}}$ School of Mathematical Sciences, Capital Normal University, Beijing, 100048, China}

\maketitle

\begin{abstract}
Erd\H{o}s posed the problem of finding conditions on a graph $G$ that imply the largest number of edges in a triangle-free subgraph is equal to the largest number of edges in a bipartite subgraph. We generalize this problem to general cases. Let $\delta_r$ be the least number so that any graph $G$ on $n$ vertices with minimum degree $\delta_rn$ has the property $P_{r-1}(G)=K_rf(G),$ where $P_{r-1}(G)$ is the largest number of edges in an $(r-1)$-partite subgraph and $K_rf(G)$ is the largest number of edges in a $K_r$-free subgraph. We show that $\frac{3r-4}{3r-1}<\delta_r\le\frac{4(3r-7)(r-1)+1}{4(r-2)(3r-4)}$ when $r\ge4.$ In particular, $\delta_4\le 0.9415.$

  \medskip
  \noindent{\it Keywords:} minimal degree, $K_r$-free subgraph, $r$-partite subgraph.

  \smallskip

  \noindent {{\it AMS subject classifications\/}:  05C35.}
\smallskip
\end{abstract}

\section{Introduction}
In $1983,$ Erd\H{o}s \cite{MR777160} raised the question that for which graphs do the largest bipartite subgraph and the largest triangle-free subgraph have the same number of edges. He noted that the equality holds for the complete graph $K_n$ by Tur\'{a}n's theorem. Later, Babai, Simonovits and Spencer \cite{MR1073101} proved that the equality holds almost surely for the random graph $G(n,1/2),$ whose edges are chosen with probability $1/2.$ A general condition was given by Bondy, Shen, Thomass\'{e} and Thomassen \cite{MR2223630} showing that a minimum degree condition is sufficient. In $2006,$ Balogh, Keevash and Sudakov \cite{MR2274084} improved both the upper and the lower bounds for the minimum degree condition implying equality.

For a graph $G,$ we write $P_r(G)$ for the largest number of edges in an $r$-partite subgraph, and $K_rf(G)$ for the largest number of edges in a $K_r$-free subgraph for each $r\ge3$. Clearly, $P_{r-1}(G)\le K_{r}f(G).$ Let $\delta_r$ denote the least number such that, for $n$ sufficiently large, any graph $G$ on $n$ vertices with minimum degree $\delta(G)\ge\delta_r n$ has the property $P_{r-1}(G)=K_{r}f(G).$ Bondy et al. \cite{MR2223630} showed that $0.675\le\delta_3\le 0.85$ and later Balogh et al. \cite{MR2274084} improved the result to $0.75\le\delta_3<0.791.$

In this paper, we give both an upper bound and a lower bound for general $\delta_r$ with $r\ge4$ as follows.
\begin{theorem}\label{general upper bound}
  When $r\ge4,$ we have $\frac{3r-4}{3r-1}<\delta_r\le\frac{4(3r-7)(r-1)+1}{4(r-2)(3r-4)}.$
\end{theorem}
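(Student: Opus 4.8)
Our proof falls into two essentially independent halves: an explicit construction for the lower bound $\delta_r>\tfrac{3r-4}{3r-1}$, and a stability argument for the upper bound $\delta_r\le\tfrac{4(3r-7)(r-1)+1}{4(r-2)(3r-4)}$.

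\medskip
\noindent\textit{Lower bound.} One exhibits, for suitable arbitrarily large $n$, a graph $G$ on $n$ vertices with $\delta(G)>\tfrac{3r-4}{3r-1}\,n$ yet $P_{r-1}(G)<K_rf(G)$. I would search for $G$ among blow-ups of a fixed graph on roughly $3r-1$ coordinates whose complement is sparse (maximum co-degree about $3$), inflating each coordinate to a clique of an appropriate size so that the minimum degree lands just above $\tfrac{3r-4}{3r-1}\,n$. The design principle is to force a strict gap between two invariants of $G$: the least number of edges whose deletion destroys every copy of $K_r$, and the least number whose deletion makes $G$ $(r-1)$-partite. One arranges that $\chi(G)$ is large (so the second invariant is substantial) while the copies of $K_r$ are \emph{concentrated} — any one of a bounded list of complete-bipartite blocks contains an edge of every $K_r$ — so that the first invariant is strictly smaller; this gives $K_rf(G)>P_{r-1}(G)$. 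Since $G$ is a blow-up of a finite graph, both invariants and the ratio $\delta(G)/n$ are determined by a finite (somewhat delicate) optimisation over the host graph and the inflation sizes.

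\medskip
\noindent\textit{Upper bound.} Assume $\delta(G)\ge\tfrac{4(3r-7)(r-1)+1}{4(r-2)(3r-4)}\,n$ and let $H\subseteq G$ be a $K_r$-free subgraph with $e(H)=K_rf(G)$. As $P_{r-1}(G)\le K_rf(G)$ always, it suffices to prove that $H$ itself is $(r-1)$-partite, for then $P_{r-1}(G)\ge e(H)=K_rf(G)\ge P_{r-1}(G)$, forcing equality. I would combine two ingredients. First, a \emph{degree transfer from $G$ to $H$}: for every vertex $v$ and every $u\in N_G(v)\setminus N_H(v)$, edge-maximality of $H$ forces a copy of $K_r$ in $H+uv$, so $N_H(u)\cap N_H(v)$ contains an $(r-2)$-clique of $H$; and since $H$ is $K_r$-free, any fixed $(r-2)$-clique of $H$ has an independent common neighbourhood in $H$. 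Counting the $(r-2)$-cliques in the $K_{r-1}$-free graph $H[N_H(v)]$ and bounding how many lost neighbours of $v$ each can account for yields $d_H(v)\ge d_G(v)-L$ with $L=\tfrac{12r-27}{4(r-2)(3r-4)}\,n$, whence the hypothesis gives $\delta(H)>\tfrac{3r-7}{3r-4}\,n$. Second, a \emph{generalised Andr\'asfai--Erd\H os--S\'os theorem}: every $K_r$-free graph on $n$ vertices with minimum degree exceeding $\tfrac{3r-7}{3r-4}\,n$ is $(r-1)$-partite, the threshold being sharp (witnessed by the balanced blow-up of $C_5\ast K_{r-3}$). Applying this to $H$ finishes the argument.

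\medskip
\noindent The bottleneck is the degree-transfer estimate: the stated constant is precisely $\tfrac{3r-7}{3r-4}$ (the Andr\'asfai--Erd\H os--S\'os threshold) plus the worst-case loss $L/n=\tfrac{4(3r-7)+1}{4(r-2)(3r-4)}$, so the entire content of the upper bound lies in showing that a single vertex cannot lose more than $L$ neighbours on passing to a maximum $K_r$-free subgraph — the factor $\tfrac{1}{r-2}$ relative to the complement degree coming from averaging over the $r-2$ vertices of each witnessing clique, and the stray $+1$'s being an integrality/strictness adjustment. For $r=4$ a sharper, hands-on version of this estimate (where the witnessing $(r-2)$-cliques are merely edges) is what improves the bound to $\delta_4\le 0.9415$.
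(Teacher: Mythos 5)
Your upper-bound strategy hinges on a ``degree transfer'' lemma asserting that every vertex satisfies $d_H(v)\ge d_G(v)-L$ with $L=\frac{12r-27}{4(r-2)(3r-4)}n$, so that $\delta(H)>\frac{3r-7}{3r-4}n$ and the Andr\'asfai--Erd\H{o}s--S\'os-type theorem (Lemma \ref{Kr-free r-1 partite}) applies directly to $H$. That lemma is false as stated: take $G=K_n$ and $r=4$. Every maximum $K_4$-free subgraph of $K_n$ is the Tur\'an graph $T_3(n)$, in which each vertex has lost about $n/3$ neighbours, whereas $L=\frac{21}{64}n<\frac{n}{3}$. More fundamentally, the counting you sketch (one witnessing $(r-2)$-clique per lost neighbour, plus independence of the common neighbourhood of each such clique) does not yield a linear bound on the number of lost neighbours: the number of $(r-2)$-cliques inside $N_H(v)$ can be superlinear and the union of their common neighbourhoods can cover almost all of $V(G)$. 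This is precisely why the paper does not try to control $\delta(H)$ at all. Instead it iteratively deletes from $H$ any vertex of degree at most $\frac{3r-7}{3r-4}$ times the current order, reaching a subgraph $M$ on $m$ vertices to which Lemma \ref{Kr-free r-1 partite} applies, and then plays two estimates on $m$ against each other: a lower bound on $m$ obtained by comparing $e(H)$ minus the deleted edges with Tur\'an's bound on $e(M)$, and an upper bound on $m$ obtained from Lemma \ref{r-1 partite wiht more edges}, which randomly redistributes the deleted vertices over the $r-1$ classes of $M$ and recovers a $\frac{r-2}{r-1}$ fraction of their incident edges. The stated $\delta$ is the threshold at which these two bounds collide; this is why your arithmetic ($\delta-L/n=\frac{3r-7}{3r-4}$) comes out consistent even though the lemma behind it does not hold.

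The lower bound half is likewise a design principle rather than a construction. The paper's base graph $G_4$ is not a blow-up of a fixed graph: it is the Balogh--Keevash--Sudakov pentagon (cliques on each $V_i\cup V_{i+1}$ together with \emph{sparse random} bipartite graphs of density $\theta$ between $V_i$ and $V_{i+2}$) joined to an independent set of relative size $\frac{3}{11}$, and for general $r$ one iteratively joins further independent sets of that size and shows each must occupy its own colour class in any optimal $(r-1)$-partition, reducing to the previous case. The randomness is essential: it is what forces every bipartite subgraph of the pentagon part to be strictly smaller than the $K_4$-free subgraph obtained by deleting the intra-clique edges. A deterministic blow-up with ``concentrated'' copies of $K_r$ would still have to be shown to produce a strict gap $P_{r-1}(G)<K_rf(G)$ at minimum degree just above $\frac{3r-4}{3r-1}n$, and you do not exhibit such a graph or verify either inequality. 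As written, neither half of the proposal constitutes a proof.
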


For the case $r=4,$ we also derive a better upper bound.
\begin{theorem}\label{upper bound for r=4}
  $\delta_4\le0.9415.$
\end{theorem}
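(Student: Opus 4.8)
The strategy is to run the same argument that yields $\delta_4\le\frac{61}{64}$ in Theorem~\ref{general upper bound}, but to replace the generic estimates by ones special to $r=4$. The one place with real room to improve is the neighbourhood structure: in a $K_4$-free graph every neighbourhood $H[N_H(v)]$ is \emph{triangle-free}, so Tur\'{a}n's theorem gives $e_H\bigl(N_H(v)\bigr)\le d_H(v)^2/4$, whereas for general $r$ one only gets the weaker $K_{r-1}$-free Tur\'{a}n bound.

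Suppose, for a contradiction, that $n$ is large, $\delta(G)\ge 0.9415\,n$, yet $P_3(G)<K_4f(G)$, and fix a maximum $K_4$-free subgraph $H\subseteq G$, so $e(H)=K_4f(G)$ and $H$ is in particular edge-maximal $K_4$-free in $G$. We use three facts. (i) \emph{Maximality:} for every $uv\in E(G)\setminus E(H)$ there is an edge $xy\in E(H)$ with $x,y\in N_H(u)\cap N_H(v)$, since otherwise $H+uv$ is still $K_4$-free. (ii) \emph{Neighbourhoods:} for every $v$ the graph $H[N_H(v)]$ is triangle-free, for a triangle there would close a $K_4$ at $v$. (iii) Since any tripartite subgraph of $G$ has at most $P_3(G)<e(H)$ edges, $H$ is not tripartite, so by the Andr\'{a}sfai--Erd\H{o}s--S\'{o}s theorem $\delta(H)\le\frac58 n$, and by its stability form a positive fraction of the vertices have $d_H(v)$ bounded below $n$; by (i) each such vertex is incident to many $G$-edges outside $H$.

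The core is a counting reduction. Fix a tripartition $(V_1,V_2,V_3)$ of $V(G)$ realizing $P_3(G)$, so that $d_{G,V_i}(v)\le\frac13 d_G(v)$ for every $v\in V_i$, and let $T$ be the tripartite subgraph of $G$ formed by all $G$-edges between the parts, so $e(T)=P_3(G)$. Writing $b$ for the number of edges of $H$ inside the parts and $m$ for the number of edges of $G$ between the parts that are not in $H$, a one-line count gives
\[
 e(T)-e(H)=m-b .
\]
Hence $m\ge b$ would contradict the assumption $e(T)=P_3(G)<e(H)$, and this is what I would aim to prove. Equivalently, it is enough to show
\[
 \sum_{v}\Bigl(d_H(v)+d_{G,V_{i(v)}}(v)\Bigr)\le\sum_v d_G(v),
\]
i.e.\ that the number of $G$-edges inside the parts is at most the number of $G$-edges missing from $H$. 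One bounds $b$ from above by combining the max-cut inequality with (ii): vertices of large $H$-degree have triangle-free neighbourhoods that cannot be concentrated in a single part, which improves the trivial bad-degree bound $d_{H,V_i}(v)\le\frac13 d_H(v)$; one bounds $m$ from below via (i) and $\delta(G)\ge 0.9415\,n$, using (ii) again to guarantee that most of the $\ge\delta(G)-d_H(v)$ neighbours of $v$ outside $N_H(v)$ actually lie in other parts. Optimizing the trade-off between vertices of small $H$-degree (which feed the lower bound on $m$) and vertices of large $H$-degree (which are dangerous for the upper bound on $b$ but are reined in by their triangle-free neighbourhoods) is exactly where the constant $0.9415$ comes out.

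I expect the main obstacle to be the vertices $v$ for which $d_H(v)$ is close to $n$ while $d_{G,V_{i(v)}}(v)$ is not small: for these the crude bad-degree estimate and the crude $m$-estimate both fail, and one must (a) quantify how unbalanced a triangle-free graph on $d_H(v)$ vertices can be across the three parts under the max-cut constraints, and (b) use (i) to charge the resulting deficit to non-$H$ crossing edges incident to the \emph{other} endpoints of these edges. Making this local exchange argument uniform over all vertices, and then checking that the resulting system of inequalities is feasible precisely when $\delta(G)\ge 0.9415\,n$, is the technical heart of the proof; the surrounding bookkeeping is parallel to that in the proof of Theorem~\ref{general upper bound}.
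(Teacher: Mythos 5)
Your proposal follows a genuinely different route from the paper. The paper's proof works as follows: it iteratively deletes low-degree vertices from the maximum $K_4$-free subgraph $H$ to obtain a subgraph $\Gamma$ on $\alpha n$ vertices with $\delta(\Gamma)>\gamma v(\Gamma)$ for a carefully chosen $\gamma=\gamma(t,\delta)$; it then applies Nikiforov's classification (Theorem~\ref{homomorphic}) to conclude that $\Gamma$ is homomorphic to $F_d+K_1$ for some $d\le 4$, uses Lemma~\ref{min lemma} to bound the edge density of any weighted blowup of $F_d+K_1$ with all neighbourhood weights at least $\gamma$, and rules out each value of $d$ by a case analysis culminating in the numerical Lemma~\ref{llllemma}. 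Your route instead fixes a maximum tripartition of $G$, observes the identity $e(T)-e(H)=m-b$, and attempts to show $m\ge b$ by a local-exchange argument using the two neighbourhood facts (edge-maximality of $H$ and triangle-freeness of $H[N_H(v)]$). That strategy is legitimate in principle and more elementary, avoiding the structural homomorphism theorems entirely.

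However, what you have written is a plan, not a proof. The entire technical content — the inequality $m\ge b$, and in particular the value $0.9415$ — is left unestablished. You say ``Optimizing the trade-off \dots is exactly where the constant $0.9415$ comes out'' and list the obstacles yourself (``I expect the main obstacle to be the vertices $v$ for which $d_H(v)$ is close to $n$ \dots one must (a) quantify \dots and (b) use (i) to charge the deficit \dots''), but none of the quantities in that trade-off are ever bounded, the charging scheme is never defined, and no inequality is actually derived and solved for $\delta$. There is no reason given why the optimum of this undisplayed system of inequalities would land at or below $0.9415$ rather than at $49/52$ or $0.99$. Also a small point: the reference to the Andrásfai--Erdős--Sós theorem is misleading here — $H$ is $K_4$-free, not triangle-free, so the relevant statement is the generalization given as Lemma~\ref{Kr-free r-1 partite} (which gives $\delta(H)\le\frac58 n$ if $H$ is not $3$-chromatic); invoking a ``stability form'' of it without stating which one is another unfilled gap. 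In short, your sketch identifies plausible tools for a different argument, but without carrying out the exchange/optimization step it does not prove the bound, and it is not clear that your approach can in fact reach $0.9415$ without the sharp structural information (homomorphism to $F_2+K_1$) that the paper extracts from Nikiforov's theorem.
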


This paper is organised as follows. In the next section, we will give a proof of the general upper bound for $\delta_r$ in Theorem \ref{general upper bound}. In Section $3$, we will describe several inductive constructions to prove the lower bound. In Section $4,$ we show some important properties of $K_4$-free graphs under some minimum degree conditions and prove a better upper bound for $\delta_4.$ Section $5$ contains a little stronger upper bound for $\delta_4.$ The last section contains some concluding remarks.

\section{A general upper bound for $\delta_r$}

\textbf{Notations.} Let $G=(V,E)$ be a graph with vertex set $V(G)$ and edge set $E(G).$ Write $v(G)=|V(G)|$ and $e(G)=|E(G)|.$ If $X\subset V(G),$ then $G[X]$ denotes the restriction of $G$ to $X,$ i.e. the graph with vertex set $X$ whose edges are those edges of $G$ with both endpoints in $X.$
Given a graph $G$ and a vertex $u$ in $V(G),$ $d_A(u)$ denotes the neighbors of $u$ in $A,$ where $A\subseteq V(G).$
 For $k\ge2,$ the $k$-th power of $G,$ $G^k,$ is a graph such that $V(G^k)=V(G)$ and $E(G^k)=\{uv: \text{distance between } u\text{ and }v\text{ in } G\text{ is at most }k\}.$ For two graphs $G$ and $H,$ $G+H$ is a graph such that $V(G+H)=V(G)\cup V(H)$ and $E(G+H)=E(G)\cup E(H)\cup\{uv: u\in V(G), v\in V(H)\}.$

Before we prove the upper bound, we need some preparations.

The following lemma is an easy exercise for the probabilistic method.
\begin{lemma}
  Suppose $G$ is a graph on $n$ vertices, then $G$ contains an $(r-1)$-partite subgraph with at least $\frac{r-2}{r-1}e(G)$ edges.
\end{lemma}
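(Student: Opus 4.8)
The plan is to use a standard random partition argument. I would partition $V(G)$ into $r-1$ parts $V_1, \dots, V_{r-1}$ by assigning each vertex independently and uniformly at random to one of the $r-1$ parts. The resulting $(r-1)$-partite subgraph $H$ consists of all edges of $G$ whose endpoints lie in different parts, so I want to estimate $\mathbb{E}[e(H)]$.

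The key step is a computation by linearity of expectation: for a fixed edge $uv \in E(G)$, the probability that $u$ and $v$ are assigned to different parts is $1 - \frac{1}{r-1} = \frac{r-2}{r-1}$, since given the part of $u$, the vertex $v$ avoids that part with probability $\frac{r-2}{r-1}$. Summing over all edges gives
\[
\mathbb{E}[e(H)] = \sum_{uv \in E(G)} \Pr[u, v \text{ in different parts}] = \frac{r-2}{r-1}\, e(G).
\]
Since the expectation of $e(H)$ equals $\frac{r-2}{r-1} e(G)$, there must exist at least one partition for which $e(H) \ge \frac{r-2}{r-1} e(G)$, and for that partition $H$ is an $(r-1)$-partite subgraph of $G$ with the desired number of edges.

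There is no real obstacle here; the only mild subtlety is to make sure the independence of the vertex assignments is used correctly when computing the probability for a single edge, which is immediate. If one prefers to avoid expectations entirely, an equivalent deterministic argument proceeds by a greedy/local-switching method: start with any partition into $r-1$ parts and repeatedly move a vertex $u$ from its current part to a part in which it has the fewest neighbors; since $u$ has at most $\frac{1}{r-1} d(u)$ neighbors in some part, each move does not decrease the number of crossing edges, and the process terminates at a partition where every vertex has at least $\frac{r-2}{r-1} d(u)$ crossing incident edges, giving $e(H) \ge \frac{1}{2}\sum_u \frac{r-2}{r-1} d(u) = \frac{r-2}{r-1} e(G)$. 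Either version completes the proof.
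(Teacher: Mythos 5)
Your proof is correct and uses essentially the same argument as the paper: a uniform random partition of $V(G)$ into $r-1$ parts, noting each edge survives with probability $\frac{r-2}{r-1}$, and applying linearity of expectation. The deterministic local-switching variant you append is a fine alternative but not needed.
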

\begin{proof}
  Consider a random partition of vertices of $G$ into $r-1$ parts and the probability of each edge that still survives is $1-\frac{r-1}{(r-1)^2}=\frac{r-2}{r-1}.$ By the linearity of expectation, there exists an $(r-1)$-partite subgraph with at least $\frac{r-2}{r-1}e(G)$ edges.
\end{proof}

\begin{lemma}\label{r-1 partite wiht more edges}
  Suppose $\Gamma$ is an $(r-1)$-partite subgraph of a graph $G$ and there are $m$ edges incident to the vertices in $V(G)\setminus V(\Gamma),$ then $G$ has an $(r-1)$-partite subgraph of size at least $e(\Gamma)+\frac{r-2}{r-1}m.$
\end{lemma}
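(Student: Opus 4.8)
The plan is to extend the given partition of $V(\Gamma)$ into an $(r-1)$-partition of all of $V(G)$ by placing the "missing" vertices $W := V(G)\setminus V(\Gamma)$ into the $r-1$ parts, and to do this greedily so that each such vertex recovers at least a $\tfrac{r-2}{r-1}$-fraction of the edges currently incident to it. First I would fix the $(r-1)$-partition $V_1,\dots,V_{r-1}$ of $V(\Gamma)$ witnessing $e(\Gamma)$, and let $W=\{w_1,\dots,w_t\}$. The set $M$ of edges incident to $W$ has $|M|=m$. I would process the vertices of $W$ one at a time; at the step for $w_i$, the parts $V_1,\dots,V_{r-1}$ have already been augmented by $w_1,\dots,w_{i-1}$ (each placed into one of the parts). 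The key observation is that the edges from $w_i$ currently "in play" are exactly the edges of $M$ joining $w_i$ to $V(\Gamma)$ together with the edges of $M$ joining $w_i$ to $w_j$ for $j<i$ — call this set $M_i$, and note the $M_i$ partition $M$, so $\sum_i |M_i| = m$.

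The main step is the averaging argument: when we assign $w_i$ to one of the $r-1$ parts, the edges of $M_i$ lost are those going to the chosen part, and by choosing the part minimizing this number (equivalently, the part into which $w_i$ sends the fewest already-placed neighbors), we lose at most $\tfrac{1}{r-1}|M_i|$ edges and hence keep at least $\tfrac{r-2}{r-1}|M_i|$. Summing over $i$, the total number of edges of $M$ retained in the final $(r-1)$-partite subgraph is at least $\tfrac{r-2}{r-1}\sum_i|M_i| = \tfrac{r-2}{r-1}m$. Crucially, none of the edges counted in $e(\Gamma)$ are ever removed, since those edges lie entirely within $\Gamma$ and the original partition $V_1,\dots,V_{r-1}$ already properly separates them; adding new vertices to the parts cannot merge two old parts. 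Therefore the resulting $(r-1)$-partite subgraph of $G$ has at least $e(\Gamma) + \tfrac{r-2}{r-1}m$ edges.

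I would then just observe that the greedy choice is always possible: $w_i$ has some number $k_i \le |M_i|$ of already-placed neighbors (in $V(\Gamma)\cup\{w_1,\dots,w_{i-1}\}$), these are distributed among $r-1$ parts, so some part contains at most $\lfloor k_i/(r-1)\rfloor \le \tfrac{1}{r-1}|M_i|$ of them, and we put $w_i$ there. This is really the same pigeonhole used in Lemma 2.2, applied vertex-by-vertex rather than globally. The only point requiring a moment's care — and the closest thing to an obstacle — is bookkeeping the edges so that each edge of $M$ is "charged" to exactly one step (namely, the later of its two endpoints among $W$, or its unique $W$-endpoint if the other endpoint is in $\Gamma$); once that partition of $M$ into the $M_i$ is set up correctly, the summation is immediate and the proof is complete.
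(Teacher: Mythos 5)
Your argument is correct, but it takes a genuinely different route from the paper. The paper places each vertex of $V(G)\setminus V(\Gamma)$ into one of the $r-1$ parts independently and uniformly at random, so that each edge of $M$ survives with probability $\tfrac{r-2}{r-1}$, and then invokes linearity of expectation; the existence of a good partition follows immediately. You instead derandomize: you process the missing vertices one by one, charge each edge of $M$ to the later of its two endpoints (or to its unique $W$-endpoint), and greedily place each vertex in whichever part currently contains the fewest of its already-placed neighbors, losing at most a $\tfrac{1}{r-1}$-fraction of the charged edges at each step by pigeonhole. Your bookkeeping is set up correctly --- the $M_i$ do partition $M$, the greedy choice is justified since $|M_i|$ equals the number of already-placed neighbors of $w_i$, and the edges of $\Gamma$ are never destroyed because extending the parts cannot merge them. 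The trade-off is the usual one: the probabilistic proof is shorter to state, while your greedy version is constructive and avoids any appeal to expectation; both yield exactly the same bound.
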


\begin{proof}
  Let $(A_1,A_2,\dots,A_{r-1})$ be the partition of $\Gamma.$ Consider an $(r-1)$-partite subgraph $G'$ with parts $(B_1,B_2,\dots,B_{r-1})$ where $A_i\subseteq B_i$ for $i\in\{1,2,\dots,r-1\}.$ Then we place each vertex $v\in V(G)\setminus V(\Gamma)$ in $B_i$ independently with probability $\frac{1}{r-1}$ for each $1\le i\le r-1$, so each edge incident to a vertex in $V(G)\setminus V(\Gamma)$ appears in $G'$ with probability $\frac{r-2}{r-1}.$ By the linearity of expectation $E[e(G')]=e(\Gamma)+\frac{r-2}{r-1}m,$ there exists an $(r-1)$-partite subgraph of $G$ with at least the desired number of edges, completing the proof.
\end{proof}

\begin{remark}
  In \cite{MR2274084}, Balogh et al. proved the bipartite case. The proof for general cases is similar. For the sake of completeness, we give a proof here.
\end{remark}

The following lemma describes the sufficiency when a $K_r$-free graph must be $r-1$ partite.
\begin{lemma}[\cite{MR340075}]\label{Kr-free r-1 partite}
  Let $G$ be a $K_r$-free graph on $n$ vertices such that $\delta(G)>\frac{3r-7}{3r-4}n.$ Then $\chi(G)\le r-1.$
\end{lemma}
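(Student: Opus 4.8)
The plan is to prove Lemma~\ref{Kr-free r-1 partite} by induction on $r\ge 3$. The base case $r=3$ is the statement that a triangle-free graph $G$ with $\delta(G)>\frac25 n$ is bipartite. Here I would assume for contradiction that $G$ is triangle-free, $\delta(G)>\frac25 n$, and non-bipartite, and fix a \emph{shortest} odd cycle $C=v_1v_2\cdots v_{2k+1}v_1$. Triangle-freeness gives $2k+1\ge5$, and minimality shows that $C$ has no chord (a chord would split $C$ into two strictly shorter cycles, one of which is odd), so that $|N(v_i)\cap V(C)|=2$ for each $i$. The key point is that in fact $|N(v)\cap V(C)|\le2$ for \emph{every} $v\in V(G)$: if $v$ were adjacent to $v_a$ and $v_b$ whose cyclic distance $d$ on $C$ is different from $2$, then joining the path $v_a v v_b$ to whichever of the two arcs of $C$ between $v_a$ and $v_b$ has odd length would produce an odd cycle strictly shorter than $C$ (for $d=1$ this is just a triangle), and no subset of $\mathbb{Z}_{2k+1}$ of size $\ge3$ has all pairwise distances equal to $2$. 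Counting incidences between $V(C)$ and $V(G)$ then gives
\[
(2k+1)\,\delta(G)\ \le\ \sum_{i=1}^{2k+1} d(v_i)\ =\ \sum_{v\in V(G)}|N(v)\cap V(C)|\ \le\ 2n,
\]
whence $\delta(G)\le\frac{2}{2k+1}n\le\frac25 n$, a contradiction.

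For the inductive step, assume the statement for $r-1$ and let $G$ be $K_r$-free with $\delta(G)>\frac{3r-7}{3r-4}n$. If $\omega(G)\le r-2$ then $G$ is $K_{r-1}$-free, and since $(3r-7)^2>(3r-10)(3r-4)$ we have $\frac{3r-7}{3r-4}>\frac{3(r-1)-7}{3(r-1)-4}$, so the inductive hypothesis immediately gives $\chi(G)\le r-2$. So assume $\omega(G)=r-1$. For every vertex $z$, the induced subgraph $G[N(z)]$ is $K_{r-1}$-free, and for $u\in N(z)$ we have $d_{G[N(z)]}(u)=|N(u)\cap N(z)|\ge d(u)+d(z)-n\ge\delta(G)+d(z)-n$; since
\[
n-\delta(G)\ <\ \frac{3}{3r-4}\,n\ =\ \frac{3}{3r-7}\cdot\frac{3r-7}{3r-4}\,n\ <\ \frac{3}{3r-7}\,d(z),
\]
this yields $\delta\big(G[N(z)]\big)\ge\delta(G)+d(z)-n>\frac{3(r-1)-7}{3(r-1)-4}\,|N(z)|$, so by the inductive hypothesis $G[N(z)]$ is $(r-2)$-colourable, for every $z$.

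It remains --- and this is the hardest step --- to deduce $\chi(G)\le r-1$ from the fact that every neighbourhood of $G$ is $(r-2)$-colourable. I would argue by contradiction: if $\chi(G)\ge r$, take a vertex-critical subgraph $H\subseteq G$ with $\chi(H)=r$; then no vertex $z$ of $G$ can be adjacent to all of $V(H)$, since otherwise $H\subseteq G[N(z)]$ would force $\chi\big(G[N(z)]\big)\ge r$. The goal is then to extract enough of the structure of $H$ to run a double-counting argument in the spirit of the base case --- comparing $\sum_{u\in V(H)}d_G(u)$ with $\sum_{v\in V(G)}|N(v)\cap V(H)|$, using that each $G[N(v)\cap V(H)]$ is $K_{r-1}$-free --- and thereby contradict $\delta(G)>\frac{3r-7}{3r-4}n$. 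The main obstacle is controlling the structure of a general $r$-critical $K_r$-free subgraph $H$: in the base case $H$ is simply an odd cycle, and the decisive extra leverage is the ``pairwise cyclic distance $2$'' property, which has no automatic analogue in general. This is also where the sharpness of the constant $\frac{3r-7}{3r-4}$ --- attained by blow-ups of the join of $C_5$ with $K_{r-3}$, and more generally by Andr\'asfai-type graphs --- must be exploited. Alternatively, one may simply cite the theorem of Andr\'asfai, Erd\H{o}s and S\'os \cite{MR340075}.
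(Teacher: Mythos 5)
The paper gives no proof of this lemma at all: it is quoted verbatim from Andr\'asfai--Erd\H{o}s--S\'os \cite{MR340075}, so the citation you offer at the very end is exactly what the paper does, and is the acceptable route here. Judged as a self-contained proof, however, your attempt has a genuine gap. The base case $r=3$ is correct and complete (shortest odd cycle, no vertex with three neighbours on it, double counting), and the first half of your inductive step is also sound: the case $\omega(G)\le r-2$ reduces to the $(r-1)$-statement since $\frac{3r-7}{3r-4}>\frac{3r-10}{3r-7}$, and your computation that $\delta\bigl(G[N(z)]\bigr)>\frac{3(r-1)-7}{3(r-1)-4}\,|N(z)|$ for every vertex $z$ is verified correctly.

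The problem is the final step, which you yourself flag as missing: ``every neighbourhood is $(r-2)$-colourable'' does \emph{not} imply $\chi(G)\le r-1$, and no amount of hand-waving about $r$-critical subgraphs and double counting closes this. Local chromatic information is in general far weaker than global: already for $r=4$ there exist graphs in which every neighbourhood is bipartite yet the chromatic number is arbitrarily large (Mycielski/Kneser-type constructions), so the degree hypothesis must enter again in an essential, structural way --- this is precisely the hard content of the theorem, e.g.\ via locating a $K_{r-2}$ and analysing the (independent) common neighbourhood, or via the homomorphism-to-$F_d+K_{r-2}$ machinery that the paper invokes later (Theorem~\ref{homomorphic}). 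Your sketch for this step (take an $r$-critical $H$, note no vertex dominates $V(H)$, hope to double count) is not an argument but a research plan, and as you concede there is no analogue of the ``pairwise distance $2$'' leverage. So either carry out that step in full or simply cite \cite{MR340075}; as written, the proposal proves the lemma only for $r=3$.
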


Now we restate Theorem \ref{general upper bound} and prove it. The proof is similar to that for the case of $r=3$ in \cite{MR2223630}.
\begin{theorem}
  Let $G$ be a graph on $n$ vertices with minimum degree $\delta(G)\ge\frac{4(3r-7)(r-1)+1}{4(r-2)(3r-4)}n+1,$ where $n$ is sufficiently large. Then the largest $K_r$-free and the largest $(r-1)$-partite subgraphs of $G$ have equal size. Therefore, $\delta_r\le\frac{4(3r-7)(r-1)+1}{4(r-2)(3r-4)}.$
\end{theorem}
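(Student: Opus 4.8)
The plan is to take a largest $K_r$-free subgraph $H$ of $G$ and show that it is actually $(r-1)$-partite, which forces $K_rf(G)\le P_{r-1}(G)$; the reverse inequality $P_{r-1}(G)\le K_rf(G)$ is trivial, so equality follows. The natural dichotomy is on the minimum degree of $H$. If $\delta(H)>\frac{3r-7}{3r-4}v(H)$, then Lemma \ref{Kr-free r-1 partite} immediately gives $\chi(H)\le r-1$, so $H$ is $(r-1)$-partite and we are done. The real work is the complementary case, where $H$ contains a vertex of comparatively low degree in $H$.

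So suppose $H$ has a vertex $v$ with $d_H(v)\le\frac{3r-7}{3r-4}v(H)$. First I would pass to the set $W$ of vertices of $H$ that have low degree in $H$ (say at most $\frac{3r-7}{3r-4}v(H)$, perhaps with a small slack), and argue that $W$ cannot be too large: every vertex of $G$ has degree at least $\delta(G)n$-ish, so a vertex of low $H$-degree loses many of its $G$-neighbours, and if $W$ were large we could find an $(r-1)$-partite subgraph beating $H$. Concretely, I would delete $W$ from $H$ to obtain $H'=H[V(H)\setminus W]$, which still has $\delta(H')>\frac{3r-7}{3r-4}v(H')$ (choosing the threshold for $W$ so that this is forced), hence $H'$ is $(r-1)$-partite by Lemma \ref{Kr-free r-1 partite}, giving an $(r-1)$-partite subgraph $\Gamma=H'$ of $G$. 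Now apply Lemma \ref{r-1 partite wiht more edges}: the edges of $G$ incident to $V(G)\setminus V(\Gamma)$ — which includes all of $W$ and all of $V(G)\setminus V(H)$ — number at least roughly $\frac12\cdot\delta(G)\cdot|V(G)\setminus V(\Gamma)|$ (being careful about double counting within that set), so $G$ has an $(r-1)$-partite subgraph of size at least $e(H')+\frac{r-2}{r-1}m$. We then compare this against $e(H)$: the edges of $H$ not in $H'$ are all incident to $W$, and there are at most $|W|\cdot\frac{3r-7}{3r-4}v(H)$ of them, whereas $m$ counts essentially the same vertices but with the much larger $G$-degree $\delta(G)n$. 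The numerology is arranged precisely so that $\frac{r-2}{r-1}m$ exceeds $e(H)-e(H')$, contradicting the maximality of $H$ as a $K_r$-free subgraph (since an $(r-1)$-partite graph is automatically $K_r$-free). This is where the exact constant $\frac{4(3r-7)(r-1)+1}{4(r-2)(3r-4)}$ comes from: setting $\delta(G)=\delta n$, the inequality one needs is of the shape $\frac{r-2}{r-1}\cdot\frac{\delta n\cdot|W\cup(V\setminus V(H))| - (\text{internal edges})}{?} > \frac{3r-7}{3r-4}v(H)\cdot|W| + (\text{edges to }V\setminus V(H))$, and solving for $\delta$ yields the stated bound.

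The main obstacle, and the step I would spend the most care on, is the bookkeeping of edges incident to $V(G)\setminus V(\Gamma)$: this set is the disjoint union of $W$ (vertices of $H$ deleted for low degree) and $V(G)\setminus V(H)$ (vertices outside the extremal subgraph entirely), and one must lower-bound the number of $G$-edges meeting it while correctly handling edges with both endpoints in the set (counted once, not twice) and simultaneously upper-bound the number of $H$-edges meeting $W$ that get lost when passing from $H$ to $H'$. A clean way to organise this is to write $e(G) \ge e(\Gamma) + m$ trivially and instead directly estimate $m \ge \frac{1}{2}\big(\delta(G)\,|S| - 2e(G[S])\big) + \dots$ where $S = V(G)\setminus V(\Gamma)$, but since $e(G[S])$ could be as large as $\binom{|S|}{2}$ this naive bound is too weak; the fix is that $G[S]$ restricted to $W$ is a subgraph of $H$, hence $K_r$-free, so $e(G[W]) \le (1-\frac{1}{r-1})\frac{|W|^2}{2}$ by Turán, and the vertices of $V(G)\setminus V(H)$ contribute their full $G$-degree since none of their incident edges lie in $\Gamma$. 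Getting these three estimates to combine into the required strict inequality for all $r\ge4$, using $n$ large to absorb the lower-order terms (the ``$+1$'' in the degree hypothesis and constants), is the crux; everything else is either trivial or a direct citation of the two lemmas. I would also double-check the boundary behaviour of the constant (it should be less than $1$ for $r\ge4$, consistent with the claimed $\delta_4\le 0.9415$ being superseded only in later sections) as a sanity check on the algebra.
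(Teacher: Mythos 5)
Your overall strategy matches the paper's: iteratively delete low-degree vertices from the extremal $K_r$-free graph $H$ until the remainder $\Gamma$ has minimum degree above $\frac{3r-7}{3r-4}v(\Gamma)$ (so that Lemma \ref{Kr-free r-1 partite} forces $\Gamma$ to be $(r-1)$-partite), then recover the lost edges by randomly assigning the deleted vertices to parts, via Lemma \ref{r-1 partite wiht more edges}. But there is a genuine gap in how you close the final inequality. You say you would ``argue that $W$ cannot be too large'' because ``if $W$ were large we could find an $(r-1)$-partite subgraph beating $H$,'' and you then try to solve directly for $\delta$ from the edge-recovery inequality. That reasoning is both circular and in the wrong direction. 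Writing $v(\Gamma)=m$, the edge-recovery inequality $\frac{r-2}{r-1}|F|\ge e(H)-e(\Gamma)$, after bounding and dividing by $n-m$, becomes \emph{harder} to satisfy as $m$ shrinks (i.e.\ as $|W|$ grows), because $\frac{3r-7}{3r-4}<\frac{r-2}{r-1}$ makes the coefficient of $m$ negative. Indeed, for $m$ near $0$ the required degree would be $\delta\ge\frac{1}{2}+\frac{(r-1)(3r-7)}{2(r-2)(3r-4)}$ (for $r=4$ this is $\frac{31}{32}$), which exceeds the target $\frac{4(3r-7)(r-1)+1}{4(r-2)(3r-4)}$ (for $r=4$, $\frac{61}{64}$). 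So one cannot simply ``solve for $\delta$'' uniformly in $m$; the recovery step alone does not suffice, and your heuristic that large $W$ helps is backwards. The paper closes this with a second, independent constraint you do not identify: since $\Gamma$ is $(r-1)$-partite, $e(\Gamma)\le\frac{r-2}{2(r-1)}m^2$ by Tur\'an, while the deletion process guarantees $e(\Gamma)\ge e(H)-\frac{3r-7}{3r-4}\bigl(\binom{n+1}{2}-\binom{m+1}{2}\bigr)$ and $e(H)\ge\frac{r-2}{r-1}\cdot\frac{\delta}{2}n^2$ by the probabilistic bound; combining these gives $m\ge\bigl(\delta(r-2)(3r-4)-(3r-7)(r-1)\bigr)^{1/2}n$, and the chosen $\delta$ is exactly calibrated so that this lower bound meets the threshold required by the recovery step. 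Without this floor on $m$, the argument does not close at the claimed $\delta$.

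A secondary issue: your proposed refinement that ``$G[S]$ restricted to $W$ is a subgraph of $H$, hence $K_r$-free'' is false. $G[W]$ carries all $G$-edges inside $W$, not only the $H$-edges, and $G$ itself is not assumed $K_r$-free. The paper needs no such refinement once the Tur\'an lower bound on $m$ is available: the crude estimate $|F|\ge(\delta n+1)(n-m)-\binom{n-m}{2}$ suffices.
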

\begin{proof}
  Set $\delta:=\frac{4(3r-7)(r-1)+1}{4(r-2)(3r-4)}$ and suppose $G$ is an $n$-vertex graph with minimum degree $\delta(G)\ge\delta n+1.$ Let $H$ be the largest $K_r$-free subgraph in $G$ and $A$ the largest $(r-1)$-partite subgraph in $G$. By an easy application of the probabilistic method, we have that $e(H)\ge e(A)\ge\frac{r-2}{r-1}e(G)\ge\frac{r-2}{r-1}\frac{\delta(G)}{2}n\ge\frac{r-2}{r-1}\frac{\delta n+1}{2}n.$ Now, if $H$ has a vertex $x_1$ of degree at most $\frac{3r-7}{3r-4}n$, we delete it. Similarly, if $H\setminus x_1$ has a vertex $x_2$ with degree at most $\frac{3r-7}{3r-4}(n-1),$ we delete it again. We continue in this way until we obtain a graph $M$ with $m$ vertices and minimum degree greater than $\frac{3r-7}{3r-4}m.$ By Lemma \ref{Kr-free r-1 partite} , $M$ is $(r-1)$-partite. The claim $e(H)=e(A)$ follows trivially if $m=n.$ Now suppose $m<n.$ As
  \begin{align*}
    {{r-1}\choose{2}}(\frac{m}{r-1})^2=\frac{m^2(r-2)}{2(r-1)}\ge& e(M)\ge e(H)-\frac{3r-7}{3r-4}({n+1\choose 2}-{m+1\choose 2})\\
    \ge&\frac{r-2}{r-1}\frac{\delta}{2}n^2-\frac{3r-7}{2(3r-4)}(n^2-m^2),
  \end{align*}
it follows that $m\ge(\delta(r-2)(3r-4)-(3r-7)(r-1))^{1/2}n.$\\
Let $V(G)\setminus V(M)=\{x_1, x_2,\dots,x_{n-m}\}.$ Define $F=\{e\in E(G): e \text{ is incident to at least one of } x_i,~ 1\le i\le n-m\}.$ Then
\[
    |F|\ge\sum_{i=1}^{n-m}d_{G}(x_i)-{n-m\choose 2}\ge(\delta n+1)(n-m)-{n-m\choose 2}.
\]
By Lemma \ref{r-1 partite wiht more edges}, there exists an $(r-1)$-partite graph such that the number of its edges is at least
\begin{align*}
    e(H)-\frac{3r-7}{3r-4}({n+1\choose 2}-{m+1\choose 2})+\frac{r-2}{r-1}|F|\ge&e(H)-\frac{3r-7}{3r-4}({n+1\choose 2}-{m+1\choose 2})\\
     &+\frac{r-2}{r-1}((\delta n+1)(n-m)-{n-m\choose 2}).
\end{align*}
If $e(H)-\frac{3r-7}{3r-4}({n+1\choose 2}-{m+1\choose 2})+\frac{r-2}{r-1}((\delta n+1)(n-m)-{n-m\choose 2})\ge e(H),$ which implies $e(A)\ge e(H)$ and thus $e(A)=e(H),$ we are done.\\
That is
\[
    \frac{r-2}{r-1}((\delta n+1)(n-m)-{n-m\choose2})-\frac{3r-7}{3r-4}({n+1\choose2}-{m+1\choose2})\ge0.
\]
Since $m<n,$ we divide both sides by $n-m$ and we have that
\[
    m\ge 2(r-1)(3r-4)((\frac{1}{2}-\delta)\frac{r-2}{r-1}+\frac{3r-7}{2(3r-4)})n-\frac{r-2}{2(r-1)}.
\]
The only condition on $m$ is that $n>m\ge(\delta(r-2)(3r-4)-(3r-7)(r-1))^{1/2}n,$ hence it suffices to show
\[
  (\delta(r-2)(3r-4)-(3r-7)(r-1))^{1/2}n\ge 2(r-1)(3r-4)((\frac{1}{2}-\delta)\frac{r-2}{r-1}+\frac{3r-7}{2(3r-4)})n.
\]
Now we substitute $\delta=\frac{4(3r-7)(r-1)+1}{4(r-2)(3r-4)}$ in the above inequality, then the conclusion follows.
\end{proof}

\section{A general lower bound for $\delta_r$}

In this section, we give a lower bound for $\delta_r$ when $r\ge4.$ We first prove the case when $r=4$ to illustrate our main idea, which is also our base case.

In \cite{MR2274084}, Balogh et. al. gave the following lower bound for $\delta_3.$
\begin{theorem}[\cite{MR2274084}]
For any $\delta<\frac{3}{4},$ there is $n$ and a graph $G$ on $n$ vertices with minimum degree
at least $\delta n,$ in which the largest triangle-free subgraph has more edges than the largest bipartite subgraph. Therefore $\delta_3\ge\frac{3}{4}.$
\end{theorem}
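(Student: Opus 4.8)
\emph{The plan} is to reduce the claim --- for every $\delta<3/4$, exhibit a graph $G$ on $n$ vertices with $\delta(G)\ge\delta n$ and $K_3f(G)>P_2(G)$ --- to the construction of a single fixed finite graph, and then to build that graph. Suppose $H$ is a fixed graph with (i) $K_3f(H)>P_2(H)$ and (ii) $\delta(H)\ge(3/4-\varepsilon)v(H)$ for a prescribed $\varepsilon>0$. I would take $G=H[t]$, the balanced blow-up in which each vertex of $H$ becomes an independent set of size $t$ and each edge of $H$ becomes a complete bipartite graph. Blowing up a largest triangle-free subgraph of $H$ gives $K_3f(G)\ge t^2K_3f(H)$. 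On the other hand, writing the size of a cut $(A,B)$ of $V(G)$ in the fractional variables $x_u=|A\cap(\text{part of }u)|/t$ for $u\in V(H)$ shows that it equals $t^2g(x)$ with $g$ multilinear and $g(\mathbf{1}_S)=\mathrm{cut}_H(S)$; since a multilinear function on $[0,1]^{V(H)}$ attains its maximum at a $\{0,1\}$ vertex, $P_2(G)=t^2P_2(H)$. As $\delta(G)/v(G)=\delta(H)/v(H)$ and $t$ is arbitrary, this gives arbitrarily large $G$ with $\delta(G)\ge(3/4-\varepsilon)v(G)$ and $K_3f(G)>P_2(G)$; letting $\varepsilon\to0$ yields $\delta_3\ge3/4$.

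\emph{The remaining task} is to construct finite graphs $H$ whose minimum-degree ratio $\delta(H)/v(H)$ is arbitrarily close to $3/4$ with $K_3f(H)>P_2(H)$. I would start from $C_5$ --- where $K_3f=5>4=P_2$, but the ratio is only $2/5$ --- and densify it: replace each vertex of $C_5$ by a clique, and more generally take iterated joins and weighted blow-ups of such ``$5$-cyclic'' gadgets with carefully chosen relative sizes, tracking at each step how $K_3f$ and $P_2$ change. For the resulting $H$ one exhibits a triangle-free subgraph beating every cut explicitly --- inside each $5$-cyclic block take a blown-up $C_5$, and between blocks keep only cross-edges following an ``$I$-versus-$J$'' pattern for disjoint independent sets $I,J$ of the blocks, deleting just enough edges to destroy every triangle --- and one bounds $P_2(H)$ from above by optimising the cut over all colourings of the (few) blocks.

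\emph{The main obstacle} is precisely this construction and its verification. Condition (ii) forces $H$ to be dense ($e(H)\approx\tfrac38v(H)^2$), hence $P_2(H)\ge e(H)/2\approx\tfrac3{16}v(H)^2$; yet one needs a non-bipartite triangle-free subgraph with more than $P_2(H)$ edges, and by Tur\'an's theorem such a subgraph has at most $v(H)^2/4$ edges --- a narrow window that also requires $H$ to be ``far from bipartite'', i.e. clique-like, rather than sparse. Meeting all three demands at once, and then carrying out the finite (but not small) optimisation that evaluates $P_2(H)$ for the extremal gadget, is where the real work lies; the blow-up reduction above is routine.
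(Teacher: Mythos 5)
There is a genuine gap: your argument never actually produces the graph, and the reduction you do carry out does not cover the construction that is needed. The blow-up step (multilinearity of the cut function giving $P_2(H[t])=t^2P_2(H)$, and $K_3f(H[t])\ge t^2K_3f(H)$) is fine, but, as you concede yourself, the entire content of the theorem is the existence of graphs $H$ with $\delta(H)/v(H)\to 3/4$ and $K_3f(H)>P_2(H)$, and for this you offer only a heuristic (``replace each vertex of $C_5$ by a clique, take iterated joins and weighted blow-ups'') with no candidate exhibited and no verification. A proof proposal that reduces the statement to ``construct the extremal gadget'' and then stops has not proved the statement.

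Moreover, the gadget you gesture at cannot be a blow-up of a small fixed graph at all, which is why your framework would not close even with more effort of the kind you describe. The construction of Balogh, Keevash and Sudakov (recalled in the proof of Theorem 3.4 of this paper) partitions the vertices into five parts $V_0,\dots,V_4$ of equal size, makes each $V_i\cup V_{i+1}$ a clique (degree contribution $\tfrac{3}{5}v(G)$, exactly what your ``$C_5$ of cliques'' gives), and then adds a \emph{quasirandom} bipartite graph of edge density $\theta<3/8$ between each $V_i$ and $V_{i+2}$; letting $\theta\to 3/8$ pushes the degree ratio to $\tfrac{3+2\theta}{5}\to\tfrac{3}{4}$. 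The triangle-free subgraph is the blown-up $C_5$ with $v(G)^2/5$ edges, and the crucial point is that the max cut stays below $v(G)^2/5$ \emph{because} for every pair of subsets $A_i\subseteq V_i$, $A_{i+2}\subseteq V_{i+2}$ one has $e(A_i,A_{i+2})=\theta|A_i||A_{i+2}|+o(n^2)$. In a blow-up of a fixed finite $H$, the bipartite graph between any two parts is complete or empty, so this uniform density condition fails and the cut bound breaks; the sparse quasirandom layer is exactly the ingredient your ``clique-like'' densification of $C_5$ is missing, and it is also the part that defeats the narrow window you correctly identify as the main obstacle.
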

Based on their construction, we give a lower bound for $\delta_4.$
\begin{theorem}\label{lower bound for delta 4}
$\delta_4>\frac{8}{11}.$
\end{theorem}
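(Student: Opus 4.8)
The plan is to adapt the construction that Balogh, Keevash and Sudakov used for $\delta_3 \ge 3/4$, lifting it one level up to produce a graph witnessing $\delta_4 > 8/11$. Recall that for $r=3$ the target density $3/4$ is the minimum degree of the blow-up of a $5$-cycle (the balanced blow-up of $C_5$ has minimum degree $\tfrac35 n$, so a sharper gadget is needed; in fact the BKS construction is more delicate, involving a carefully weighted unbalanced blow-up together with a sparse ``error'' graph added on top to force a triangle-free-but-not-bipartite subgraph to beat every bipartite one). The key numeric coincidence I expect to exploit is that $\tfrac{8}{11}$ is exactly the value obtained by the transformation $\delta \mapsto \tfrac{2\delta}{1+\delta}$ (or some similarly natural operation coming from joining the $r=3$ gadget to an independent-type part) applied to $\delta = \tfrac34$: indeed $\tfrac{2\cdot 3/4}{1+3/4} = \tfrac{3/2}{7/4} = \tfrac{6}{7}$, which is not $8/11$, so instead the relevant operation is the ``cone/join'' construction $G \mapsto G + I$, and one checks $\tfrac{8}{11}$ arises from requiring the joined vertex set to have the right relative size. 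So the first step is to pin down precisely which inductive operation on graphs takes a witness for $\delta_{r-1} > c$ to a witness for $\delta_r > c'$, and verify $c = 3/4 \Rightarrow c' = 8/11$.

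Concretely, I would let $G_0$ be the BKS witness graph for $\delta_3$ on $N$ vertices with $\delta(G_0) \ge (3/4 - \varepsilon)N$ in which $K_3f(G_0) > P_2(G_0)$, and form $G = G_0 + I_t$, the join of $G_0$ with an independent set $I_t$ on $t$ vertices, choosing $t$ as a suitable fraction of $n = N + t$. The point of the join is: (i) a $K_4$-free subgraph of $G_0 + I_t$ restricted to $V(G_0)$ must be $K_3$-free if we keep all join edges and all of $I_t$ (since $I_t$ plus any triangle in $G_0$ gives a $K_4$), so the optimal $K_4$-free subgraph inherits the ``triangle-free but not bipartite'' surplus of $G_0$; (ii) a $3$-partite subgraph of $G_0+I_t$ that uses all of $I_t$ in one part must be bipartite on $V(G_0)$, losing the same surplus; one then has to argue that splitting $I_t$ across parts, or dropping join edges, cannot recover it, which is where $t$ must be chosen large enough relative to the surplus but the minimum degree forces $t$ not too large. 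Balancing ``$\delta(G) \ge \delta_4 n$'' against ``the surplus survives'' is what produces the threshold $8/11$. The minimum degree of $G_0 + I_t$ is $\min\{\delta(G_0) + t,\ N\} = \min\{(3/4-\varepsilon)N + t,\ N\}$ over $V(G_0)$ and $N$ over $I_t$; setting $(3/4)N + t$ proportional appropriately and optimizing $\tfrac{(3/4)N+t}{N+t}$ subject to the surplus-preservation constraint $t \ge (\text{const})\cdot(\text{surplus scale})$ should give a ratio approaching $8/11$ from above as $\varepsilon \to 0$ and $N \to \infty$.

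The main steps, in order: (1) recall/cite the exact BKS construction $G_0$ and the numerical gap $K_3f(G_0) - P_2(G_0) = \Omega(N)$ (or however it scales — this matters for how large $t$ must be); (2) define $G = G_0 + I_t$ and compute $\delta(G)$ exactly; (3) show $K_4f(G) \ge e(G_0 \text{ as } K_3\text{-free-optimal}) + (\text{all join edges}) = K_3f(G_0) + tN$ — i.e. the optimal $K_4$-free subgraph keeps every join edge and every $I_t$ vertex and takes a maximum triangle-free subgraph inside $G_0$; (4) the hard step: show $P_3(G) = P_2(G_0) + tN$ exactly, i.e. no $3$-partition of $V(G)$ beats ``put $I_t$ in one class, take the optimal bipartition of $G_0$ in the other two'' — this requires a case analysis on how many join edges a $3$-partition can keep when $I_t$ is split, and a convexity/counting argument that any gain from a cleverer bipartition-plus-split of $G_0$ across the three classes is outweighed by lost join edges, again using $t$ large; (5) combine (3) and (4): $K_4f(G) - P_3(G) \ge K_3f(G_0) - P_2(G_0) > 0$; (6) optimize the choice of $t = t(N)$ to push $\delta(G)/n$ above $8/11$.

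I expect step (4) to be the main obstacle. Making sure that $P_3(G)$ is \emph{exactly} the naive value — rather than just bounded — means ruling out every alternative $3$-colouring, including ones that sacrifice a few join edges to gain structure inside $G_0$, or that use the fine ``error graph'' structure of the BKS gadget. The cleanest route is probably a local-move / stability argument: start from any optimal $3$-partition of $G$, show that an optimal such partition can be taken with all of $I_t$ monochromatic (because any $I_t$-vertex in the ``wrong'' class loses $\sim N$ join edges while gaining at most $e(G_0\text{-neighbourhood stuff}) \ll N$ if $N$ is large enough relative to the perturbation, since $I_t$-vertices have no edges among themselves), and once $I_t$ is monochromatic the remaining two classes must 2-colour $V(G_0)$, whence $P_3(G) = P_2(G_0) + tN$. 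A secondary subtlety is that the inequality direction for the \emph{lower} bound on $\delta_4$ requires constructing, for every $\delta < 8/11$, an $n$ and such a $G$ — so I also need to verify that the $\varepsilon$ in the BKS gadget and the ratio $t/N$ can be tuned so $\delta(G) \ge \delta n$ for any prescribed $\delta < 8/11$, which is a routine limit computation once the exact formulas from steps (2)–(4) are in hand. If the naive join does not reach $8/11$, the fallback is to use a weighted/unbalanced blow-up of a small ``seed'' graph (e.g. replacing $I_t$ by a blow-up of $C_5$ or of $G_0$ itself) in the inductive step, mirroring whatever BKS did for $r=3$, but I would try the plain join first.
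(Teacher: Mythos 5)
Your proposal is correct and follows the paper's construction essentially exactly: the paper forms $G$ by joining an independent set $V_5$ of relative size $\tfrac{3}{11}$ to a BKS-type $\delta_3$-witness $G'$ on the remaining $\tfrac{8}{11}n$ vertices (the five-blob cyclic clique structure with sparse distance-two cross edges, so that $K_3f(G')>P_2(G')$), and the step you flag as the main obstacle — pinning down $P_3(G)=P_2(G')+|V_5|\cdot|V(G')|$ — is handled by precisely the local-move argument you outline: place all of $V_5$ in one colour class (cost-free since $V_5$ is independent and joined to everything), then observe that any $G'$-vertex sharing that class can be relocated for a strict net gain because its $G'$-degree is below $2|V_5|$. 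Combining this with the exhibited $K_4$-free subgraph (the $C_5$-blow-up inside $G'$ plus all join edges) gives $K_4f(G)\ge K_3f(G')+|V_5||V(G')|>P_3(G)$, which is exactly your steps (3)--(5); the paper's choice $|V_5|/|V(G')|=3/8$ with a small cross-edge density $\theta$ realises your step (6) and produces $\delta(G)=\tfrac{8}{11}n$.
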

\begin{proof}
   Let $G$ be a graph whose $n$ vertices are partitioned into $6$ parts $V_0, V_1, \dots, V_5$ such that $v(V_i)=\frac{8}{55}n$ for $i\in\mathbb{Z}_5=\{0,1,2,3,4\}$ and $v(V_5)=\frac{3}{11}n.$ Let $G':=G[V(G)\backslash V_5]$ whose structure looks exactly same as the construction given by \cite{MR2274084}. That is, $G'[V_i\cup V_{i+1}]$ forms a clique for $i\in\mathbb{Z}_5$ and each pair $uv$ with $u\in V_i$ in $G',$ $v\in V_{i+2}$ in $G'$ is chosen to be an edge randomly and independently with probability $\theta,$ for some $\theta < \frac{3}{8}.$ By the probability method, the authors in \cite{MR2274084} showed the existence of a graph with the probability that $e(A_i,A_{i+2})=\theta|A_i||A_{i+2}|+o(n^2)$ for any two subsets $A_i\subset V_i,$ $A_{i+2}\subset V_{i+2},$ where $i, i+2\in\mathbb{Z}_5.$ We denote it by $G'$ for simplicity. So $d_{G'}(v)=\frac{8}{55}(3+2\theta)n+o(n)$ for any $v\in G'.$ The vertices of $V_5$ form an independent set which is connected to all vertices outside of $V_5.$ So  $d_G(v)=(\frac{8}{55}(3+2\theta)+\frac{3}{11})n+o(n)>\frac{8}{11}n$ for $v\in V_i$ when $\theta$ is close to $\frac{1}{8}$, $i\in\mathbb{Z}_5$ and $d_G(u)=\frac{8}{11}n$ for $u\in V_5.$ That is, $\delta(G)=\frac{8}{11}n.$

  What we need to show now is that the largest $K_4$-free subgraph of $G$ has more edges than the largest $3$-partite subgraph.

  Assume the largest $3$-partite subgraph of $G$ partitions $V(G)$ into three parts $C_1,$ $C_2$ and $C_3.$ Let $B_i=V(G')\cap C_i$ for $1\le i\le3,$ and without loss of generality we assume that $|B_1|\le|B_2|\le|B_3|.$ Since the vertices in $V_5$ form an independent set and connect to every vertex outside of $V_5,$ we put $V_5$ in part $C_1$ which will not decrease the number of edges. Now consider the vertices in $B_1.$ Let $v\in B_1,$ by the structure of $G',$ we have $d_{G'}(v)<\frac{6}{11}n.$  There is an $i\in\{2, 3\}$ such that if we remove $v$ to $C_i,$ then it will decrease less than $\frac{3}{11}n$ edges, but at the same time it will get $\frac{3}{11}n$ new edges since $v$ is connected to every vertex in $V_5.$ Therefore, if $B_1$ is not empty, then we can always move a vertex in $B_1$ to $C_2$ or $C_3$ and get a strictly larger $3$-partite subgraph. So $B_1$ is empty. Based on the above argument, the structure of the largest $3$-partite subgraph must be that all vertices of $V_5$ form $C_1$ and the vertices of $G'$ form $C_2$ and $C_3$. By \cite{MR2274084}, the largest bipartite subgraph of $G'$ has less than $\frac{64}{605}n^2$ edges. So $P_3(G)<\frac{184}{605}n^2.$ However, the subgraph of $G$ obtained by removing the edges inside each $V_i$ for $i\in\mathbb{Z}_5$ is $K_4$-free and has $\frac{184}{605}n^2$ edges. We are done.
\end{proof}

Based on the above theorem, now we can derive the lower bounds for general cases.

\begin{theorem}
$\delta_r>\frac{3r-4}{3r-1}$ for $r\geq4$.
\end{theorem}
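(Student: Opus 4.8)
The plan is to bootstrap from the base case $\delta_4 > \frac{8}{11}$ (Theorem~\ref{lower bound for delta 4}) by an inductive construction that, given a graph witnessing $\delta_r > \frac{3r-4}{3r-1}$, produces one witnessing $\delta_{r+1} > \frac{3(r+1)-4}{3(r+1)-1} = \frac{3r-1}{3r+2}$. First I would take a near-extremal graph $G_r$ on $N$ vertices with $\delta(G_r) \ge (\frac{3r-4}{3r-1} - \varepsilon)N$ in which the largest $K_r$-free subgraph strictly beats the largest $(r-1)$-partite subgraph, and adjoin a new independent set $W$ of the right size, joined completely to $V(G_r)$, forming $G_{r+1} := G_r + \overline{K_{|W|}}$. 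The size $|W|$ should be chosen (relative to $N$) exactly so that the minimum degree of the resulting graph on $n = N + |W|$ vertices equals $\frac{3r-1}{3r+2}n$ up to $o(n)$: vertices of $W$ have degree $N$, vertices of $G_r$ have degree $\ge (\frac{3r-4}{3r-1}-\varepsilon)N + |W|$, and solving $N = \frac{3r-1}{3r+2}(N+|W|)$ gives $|W| = \frac{3}{3r-1}N$, so $n = \frac{3r+2}{3r-1}N$; one then checks the $G_r$-vertices still have degree slightly above $\frac{3r-1}{3r+2}n$ when $\varepsilon$ is small, matching the pattern already used for $r=4$ where $N = \frac{8}{11}n \cdot \frac{11}{8}$ type bookkeeping appears.

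The core of the argument is the analogue of the $B_1$-emptying step: I would show that in any largest $r$-partite subgraph of $G_{r+1}$, one part must be exactly $W$ and the remaining $r-1$ parts partition $V(G_r)$. The mechanism is identical to the proof of Theorem~\ref{lower bound for delta 4}: take a largest $r$-partition $C_1, \ldots, C_r$, set $B_i = C_i \cap V(G_r)$, put $W$ entirely into the smallest part (this never decreases edge count since $W$ is independent and complete to $V(G_r)$), and assume $|B_1| \le \cdots \le |B_r|$. For $v \in B_1$, its degree inside $G_r$ is bounded above by something like $\frac{r-1}{r}$ of $\delta(G_r)$-order — I need to extract from the structure of $G_r$ that $d_{G_r}(v) < \frac{r-1}{r}\cdot(\text{something}) < (r-1)|W|$, so that moving $v$ into the part among $C_2,\ldots,C_r$ that it has fewest neighbors in loses fewer than $|W|$ edges while gaining $|W|$ edges (its full edge set to $W$). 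Hence $B_1 = \emptyset$, and iterating forces the clean structure. Then $P_r(G_{r+1}) = P_{r-1}(G_r) + |W|\cdot N$, while removing from $G_{r+1}$ the same edges one removes from $G_r$ to make it $K_r$-free yields a $K_{r+1}$-free subgraph (adding a complete-to-everything independent set raises clique number by exactly one) with $K_r f(G_r) + |W| \cdot N$ edges; since $K_r f(G_r) > P_{r-1}(G_r)$, we get $K_{r+1}f(G_{r+1}) > P_r(G_{r+1})$, as desired.

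The main obstacle I anticipate is the degree bookkeeping in the $B_1$-emptying step: I must be sure that the upper bound on $d_{G_r}(v)$ for a vertex $v$ lying in the smallest of $r-1$ parts, combined with the chosen value of $|W|$, genuinely forces the loss to be strictly less than the gain. This is delicate because $\delta(G_r)$ is only \emph{slightly} above $\frac{3r-4}{3r-1}N$ and a vertex could have degree as large as $N-1$; the argument really needs that the relevant $r$-partite structure of $G_r$ itself (not just its minimum degree) caps $d_{G_{r}}(v)$ for vertices forced into a small part — precisely as in the $r=4$ case where $d_{G'}(v) < \frac{6}{11}n$ came from the explicit blow-up structure, not from $\delta$. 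So the inductive hypothesis probably needs to be strengthened to carry along a structural bound of the form ``every vertex lies in a near-balanced $(r-1)$-partition of the $K_r$-free part in which its degree to the other parts is controlled,'' and I would phrase the induction to propagate that extra invariant. The remaining pieces — computing $|W|$, verifying $\delta(G_{r+1}) = \frac{3r-1}{3r+2}n + o(n)$, and the clique-number bump — are routine.
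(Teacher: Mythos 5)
Your proposal takes essentially the same route as the paper: iteratively adjoin an independent set of the right size joined completely to the current graph, argue that any maximum $r$-partition of the new graph must isolate that independent set in its own part via a move-to-least-occupied-part pigeonhole argument, and add the parts. The one worry you raise --- that one needs an \emph{upper} bound on $d_{G_r}(v)$, not just a lower bound, so an abstract near-extremal witness is not enough --- is exactly right, and the paper resolves it in the simplest possible way: it does not take an arbitrary witness but builds $G_r$ as an explicit recursive augmentation of the construction from the $\delta_4>8/11$ proof, so that by induction \emph{every} vertex of $G_r$ has degree $\frac{3r-4}{3r-1}v(G_r)+o(v(G_r))$ (near-regularity being the carried invariant, which is cleaner than the partition-based invariant you sketched). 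With $|W|=\frac{3}{3r-1}v(G_r)$ the pigeonhole step needs $\frac{1}{r-1}\cdot\frac{3r-4}{3r-1}<\frac{3}{3r-1}$, i.e.\ $3r-4<3r-3$, which holds trivially, and the final chain $P_r(G_{r+1})=P_{r-1}(G_r)+|W|\,v(G_r)<K_rf(G_r)+|W|\,v(G_r)\le K_{r+1}f(G_{r+1})$ completes the step exactly as you describe.
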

\begin{proof}
Let $G_4$ be the graph $G$ in the proof of Theorem \ref{lower bound for delta 4}. We define a sequence of graphs $\{G_r\}_{r=4}^{\infty}$ recursively.

Suppose we have defined $G_{r-1}$, such that $V(G_{r-1})=V_0\cup V_1\cup\cdots\cup V_r$, where $|V_0|=|V_1|=\cdots=|V_4|=\frac{8}{55}n, |V_5|=|V_6|\cdots=|V_r|=\frac{3}{11}n$. Moreover, suppose $P_{r-2}(G_{r-1})<K_{r-1}f(G_{r-1})$.

Then we define $G_r$ as follows: $V(G_{r})=V_0\cup V_1\cup\cdots\cup V_{r+1}$, where $|V_{r+1}|=\frac{3}{11}n$, and $E(G_r)=E(G_{r-1})\cup\{uv:u\in V_0\cup V_1\cup\cdots\cup V_r, v\in V_{r+1}\}$. We view $G_{r-1}$ as an induced subgraph of $G_r$.

Then
$$
d_{G_r}(v)=\frac{8}{55}(3+2\theta)n+\frac{3}{11}(r-3)n+o(n)=(\frac{3}{11}r-\frac{1}{11})n+\frac{8}{55}(2\theta-2)n+o(n), \forall v\in V_0\cup V_1\cup\cdots\cup V_4;
$$
$$
d_{G_r}(u)=\frac{8}{11}n+\frac{3}{11}(r-4)n=(\frac{3}{11}r-\frac{4}{11})n, \forall u\in V_5\cup\cdots\cup V_{r+1}.
$$

In particular, we can take $\theta=\frac{1}{16}$, then
$$
d_{G_r}(v)=(\frac{3}{11}r-\frac{4}{11})n+o(n), \forall v\in V_0\cup V_1\cup\cdots\cup V_4.
$$

Assume the largest $(r-1)$-partite subgraph of $G_r$ partitions $V(G_r)$ into $(r-1)$ parts, say $C_1, C_2,\cdots, C_{r-1}$. Since the vertices in $V_{r+1}$ form an independent set and connect to every vertex outside of $V_{r+1}$, putting $V_{r+1}$ in part $C_{r-1}$ will not decrease the number of edges. Now consider the rest of vertices.

$\forall u\in V_5\cup\cdots\cup V_{r},$ $ d_{G_{r-1}}(u)=(\frac{3}{11}r-\frac{7}{11})n$ by induction. If there exists such a vertex $u\in C_{r-1},$ then there is $i\in\{1, 2, \cdots, r-2\}$, such that $d_{C_i}(u)\le\frac{1}{r-2}(\frac{3}{11}r-\frac{7}{11})n$ and if we move $u$ to $C_i,$ it will decrease at most $\frac{1}{r-2}(\frac{3}{11}r-\frac{7}{11})n$ edges, but at the same time, it will get $\frac{3}{11}n$($>\frac{1}{r-2}(\frac{3}{11}r-\frac{7}{11})n$) new edges since $u$ is connected to every vertex in $V_{r+1}\subseteq C_{r-1}$. Thus there would be no vertex of $V_5\cup\cdots\cup V_{r}$ in $C_{r-1},$ otherwise we will get a strictly larger $(r-1)$-partite subgraph.

$\forall v\in V_0\cup V_1\cup\cdots\cup V_4,$ $ d_{G_{r-1}}(v)=(\frac{3}{11}r-\frac{7}{11})n+o(n)$ by induction. Similarly, if there exists such a $v\in C_{r-1},$ then there is $i\in\{1, 2, \cdots, r-2\}$, such that $d_{C_i}(v)=\frac{1}{r-2}((\frac{3}{11}r-\frac{7}{11})n+o(n))$ and if we move $v$ to $C_i,$ it will decrease at most $\frac{1}{r-2}((\frac{3}{11}r-\frac{7}{11})n+o(n))$ edges, but at the same time, it will get $\frac{3}{11}n$($>\frac{1}{r-2}[(\frac{3}{11}r-\frac{7}{11})n+o(n)]$) new edges since $v$ is connected to every vertex in $V_{r+1}\subseteq C_{r-1}$. Thus there would be no vertex of $V_0\cup V_1\cup\cdots\cup V_4$ in $C_{r-1},$ otherwise we will get a strictly larger $(r-1)$-partite subgraph.

By the above argument, we conclude that the structure of the largest $(r-1)$-partite subgraph of $G_r$ must be that all vertices of $V_{r+1}$ form $C_{r-1}$ and the vertices of $V(G_{r-1})$ form the other $(r-2)$ parts. So $P_{r-1}(G_{r})=P_{r-2}(G_{r-1})+|V_{r+1}||V(G_{r-1})|<K_{r-1}f(G_{r-1})+|V_{r+1}||V(G_{r-1})|\leq K_{r}f(G_{r})$.

We have shown that the minimal degree of $G_r$ is $(\frac{3}{11}r-\frac{4}{11})n$, and the number of vertices in $G_r$ is $(\frac{3}{11}r-\frac{1}{11})n$. Therefore, for every $r\geq4$, we have constructed a graph $G_r$, in which the ratio of the minimal degree and the number of vertices is $\frac{3r-4}{3r-1}$, with $P_{r-1}(G_{r})<K_{r}f(G_{r})$. Hence, $\delta_r>\frac{3r-4}{3r-1}$.
\end{proof}

\section{A weak upper bound for $\delta_4$}

The techniques employed in this section and the next one   are adapted from \cite{MR2274084}, but  some more complicated analysis should be involved here. Before we prove Theorem \ref{upper bound for r=4}, it will be helpful to first give a slightly weak upper bound.

We first describe the structure of $K_r$-free graphs with large minimal degrees. For $d\ge 1$ we define a graph $F_d$ as follows: set $F_1=K_2$ and for every $d\ge2$ let $F_d$ be the complement of the $(d-1)$-th power of the cycle $C_{3d-1}.$ To be more precise, the vertex set $V(F_d)$ consists of the integers modulo $3d-1,$ which is denote by $\mathbb{Z}_{3d-1}.$ The vertex $v\in\mathbb{Z}_{3d-1}$ is adjacent to the vertices $v+1, v+4,v+7,\dots,v-1.$ Thus $F_d$ is a $d$-regular graph on $3d-1$ vertices.

A graph $G$ is said to be homomorphic to a graph $H,$ if there is a map $f:V(G)\rightarrow V(H)$ such that $uv\in E(G)$ implies that $f(u)f(v)\in E(H).$

In \cite{MR1264720}, Jin generalized the case $r=2$ of the theorem of Andr\'{a}sfai, Erd\H{o}s and S\'{o}s and a result of H\"{a}ggkvist from \cite{MR671908}.
\begin{theorem}[\cite{MR1264720}]
  Let $1\le d\le 9,$ and let $G$ be a triangle-free graph of order $n$ with minimum degree $\delta>\frac{d+1}{3d+2}n,$ then $G$ is homomorphic to $F_d.$
\end{theorem}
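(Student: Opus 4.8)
\medskip
\noindent\emph{Proof strategy.}
The plan is to induct on $d$, with base case $d=1$ the Andr\'{a}sfai--Erd\H{o}s--S\'{o}s theorem: a triangle-free graph with $\delta(G)>\tfrac{2}{5}n$ is bipartite, hence homomorphic to $F_1=K_2$. (That base case itself follows by the shortest-odd-cycle method: on a shortest odd cycle $C$ every vertex of $G$ has at most two neighbours on $C$, so $|C|\cdot\delta(G)\le\sum_{x\in V(G)}|N(x)\cap V(C)|\le 2n$ forces $|C|\le 4$, impossible.) Before the induction I would pass to an edge-maximal triangle-free graph $G^{*}\supseteq G$ on the same vertex set: this does not lower the minimum degree, it keeps $G^{*}$ triangle-free, and a homomorphism $G^{*}\to F_d$ restricts to $G$; so we may assume $G$ is maximal triangle-free, which forces any two non-adjacent vertices to have a common neighbour, and hence makes every $N(v)$ a maximal independent set with $N(u)\cap N(v)=\emptyset$ whenever $u\sim v$. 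Finally I would restrict attention to the critical window $\tfrac{d+1}{3d+2}n<\delta(G)\le\tfrac{d}{3d-1}n$: if $\delta(G)>\tfrac{d}{3d-1}n=\tfrac{(d-1)+1}{3(d-1)+2}n$, then the inductive hypothesis gives $G\to F_{d-1}$, and since $F_{d-1}$ maps homomorphically into $F_d$ (the Andr\'{a}sfai graphs form a chain under homomorphism), the conclusion follows; thus in the remaining range we may also assume $G\not\to F_{d-1}$.

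Next I would squeeze quantitative local structure out of the degree bound. For a vertex $v$ and two distinct neighbours $u_1,u_2$ of $v$, triangle-freeness gives $N(u_i)\cap N(v)=\emptyset$, so $N(u_1)\cup N(u_2)\subseteq\{v\}\cup(V(G)\setminus N[v])$, a set of size $n-|N(v)|\le n-\delta(G)$; consequently
\[
  |N(u_1)\cap N(u_2)|\ \ge\ |N(u_1)|+|N(u_2)|-|N(u_1)\cup N(u_2)|\ \ge\ 3\delta(G)-n\ >\ \tfrac{1}{3d+2}\,n .
\]
Iterating such inclusion--exclusion estimates bounds the intersections of several neighbourhoods and the induced degrees inside $G[V(G)\setminus N[v]]$, and forces the neighbourhoods of adjacent vertices to overlap only in the rigid, cyclic pattern exhibited by $F_d$: walking out from $v$ along an edge, the independent set $N(\cdot)$ that one meets can shift only by a bounded amount around a ``cycle'' of length $3d-1$, since a larger shift would either create a triangle or overspend the counting budget.

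The core step is to promote this rigidity to a genuine homomorphism. I would fix an anchoring \emph{template} inside $G$ --- a shortest odd cycle when $d$ is small, or more generally a small sub-configuration whose presence is forced by $\delta(G)>\tfrac{d+1}{3d+2}n$ together with $G\not\to F_{d-1}$ --- label its vertices by $\mathbb{Z}_{3d-1}$, and then colour an arbitrary vertex $w$ by recording, via the common-neighbourhood estimates above, which position of the template $N(w)$ ``aligns'' with. The degree hypothesis is then invoked twice: to ensure that \emph{every} vertex aligns with some position (a vertex aligning with none would spread its $>\tfrac{d+1}{3d+2}n$ neighbours over a union of template-neighbourhoods that the counting forbids), and to ensure the alignment is \emph{consistent along edges}, so that $wx\in E(G)$ forces the colours of $w$ and $x$ to be adjacent in $F_d$ (an inconsistent edge again produces a triangle or breaks the inclusion--exclusion budget). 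Showing that these two demands can be met simultaneously is the main obstacle, and it is precisely here that $d\le 9$ is indispensable: the thresholds $\tfrac{d+1}{3d+2}$ decrease to $\tfrac{1}{3}$ and equal $\tfrac{10}{29}$ exactly at $d=9$, and H\"{a}ggkvist's $4$-chromatic triangle-free graph on $29$ vertices --- whose balanced blow-up is triangle-free with $\delta=\tfrac{10}{29}n>\tfrac{11}{32}n$ but is, being $4$-chromatic, not homomorphic to any $F_k$ --- shows the statement \emph{fails} for every $d\ge 10$. So any proof must consume exactly the numerical slack available only for $d\le 9$, and the delicate part is the case analysis verifying conflict-freeness: one must bound the neighbourhood of a hypothetical bad vertex or bad edge against the union of the relevant template-neighbourhoods and check that $\delta(G)>\tfrac{d+1}{3d+2}n$ rules it out. (Sharpness of the threshold is immediate: the balanced blow-up of $F_{d+1}$ is triangle-free with $\delta=\tfrac{d+1}{3d+2}n$ and is not homomorphic to $F_d$.)
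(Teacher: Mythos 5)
The paper does not prove this theorem --- it is quoted from Jin as a black box (reference \cite{MR1264720}) --- so there is no internal proof to compare against. What you have written is a plausible roadmap rather than a proof. The preliminary reductions are sound: passing to an edge-maximal triangle-free supergraph, the common-neighbourhood estimate $|N(u_1)\cap N(u_2)|\ge 3\delta(G)-n$ for $u_1,u_2\in N(v)$, the fact that the Andr\'asfai graphs $F_d\cong K_{(3d-1)/d}$ form a chain in the homomorphism order (so $G\to F_{d-1}$ already gives $G\to F_d$, trivialising the range $\delta>\frac{d}{3d-1}n$), the sharpness via the balanced blow-up of $F_{d+1}$, and the role of H\"aggkvist's triangle-free $4$-chromatic vertex-transitive graph on $29$ vertices in killing $d\ge 10$ are all correct and standard.

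The genuine gap is that the core of the theorem --- producing a $\mathbb{Z}_{3d-1}$-indexed template in $G$, defining an alignment map $V(G)\to\mathbb{Z}_{3d-1}$, and verifying that this map is total and edge-compatible --- is stated as an intention rather than carried out. You write that ``the delicate part is the case analysis verifying conflict-freeness'' and stop there; but that case analysis \emph{is} the theorem. Asserting that the degree hypothesis ``rules out'' a hypothetical bad vertex or bad edge, without performing the counting, merely restates the conclusion, and the existence of the template itself (your ``small sub-configuration whose presence is forced'') is likewise asserted, not established. Jin's proof and its Andr\'asfai--Erd\H{o}s--S\'os and H\"aggkvist predecessors consist of pages of exactly this kind of local structure analysis, with complexity growing in $d$; moreover the induction via the window $\frac{d+1}{3d+2}n<\delta(G)\le\frac{d}{3d-1}n$ supplies no structural leverage once inside the window, so the inductive framing is largely cosmetic. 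As a blind reconstruction the outline has the right shape, but its centre is empty.
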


In Chapter $4$ of \cite{MR2866729}, Nikiforov proved a general result for $K_r$-free graphs which will be used later for the case $r=3$.
\begin{theorem}[\cite{MR2866729} Chapter $4,$ Theorem 2.28]\label{homomorphic}
  Let $r\ge2,$ $1 \le d \le 9,$ and let $G$ be a $K_{r+1}$-free graph of order $n$. If $\delta(G) > (1-\frac{2d-1}{(2d-1)r-d+1})n,$ then $G$ is homomorphic to $F_{d} + K_{r-2}.$
\end{theorem}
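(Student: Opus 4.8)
We only indicate the strategy, as the statement is quoted from \cite[Chapter~4, Theorem~2.28]{MR2866729}; the argument there belongs to the Andr\'{a}sfai--Erd\H{o}s--S\'{o}s circle of ideas (H\"{a}ggkvist, Jin). Write $c_r=\frac{2d-1}{(2d-1)r-d+1}$, so the hypothesis reads $\delta(G)>(1-c_r)n$, and record the identities that drive the bookkeeping: $1-c_2=\frac{d}{3d-1}$, and, for $r\ge3$, $\frac{c_r}{c_{r-1}}=1-c_r$. Note also that $F_d+K_{r-2}$ is the join of $F_d+K_{r-3}$ with a single new vertex when $r\ge3$, while $F_1+K_{r-2}=K_r$. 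The plan is to induct on $r$ with $d$ fixed, the triangle-free case providing the base.

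For $r=2$ we have $F_d+K_0=F_d$ and the hypothesis is $\delta(G)>\frac{d}{3d-1}n$; since $\frac{d}{3d-1}>\frac{d+1}{3d+2}$ for every $d\ge1$ (cross-multiply), Jin's theorem stated above applies and gives a homomorphism $G\to F_d$. The case $d=1$ is disposed of separately for all $r$: there $F_1+K_{r-2}=K_r$, so it is enough that $G$ be $r$-colourable, and since $1-c_r=\frac{r-1}{r}>\frac{3r-4}{3r-1}$ this follows from Lemma~\ref{Kr-free r-1 partite} applied to $G$ (which is $K_{r+1}$-free). Hence we may assume $r\ge3$ and $d\ge2$ from now on.

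For the inductive step I would assume the theorem for $r-1$ (with the same $d$) and pass to a neighbourhood. Fix any vertex $u$ and put $H=G[N(u)]$, a graph on $|N(u)|=d_G(u)$ vertices. A $K_r$ inside $N(u)$ together with $u$ would be a $K_{r+1}$, so $H$ is $K_r$-free; moreover, for $w\in N(u)$,
\[
 d_H(w)\ \ge\ d_G(w)-(n-d_G(u))\ >\ (1-c_r)n-n+d_G(u)\ =\ d_G(u)-c_rn\ >\ (1-c_{r-1})\,d_G(u),
\]
the last step being equivalent to $d_G(u)>\frac{c_r}{c_{r-1}}n=(1-c_r)n$, which holds as $d_G(u)\ge\delta(G)$. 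Thus $H$ is $K_{((r-1)+1)}$-free with $\delta(H)>(1-c_{r-1})v(H)$, so by the induction hypothesis there is a homomorphism $g\colon N(u)\to F_d+K_{r-3}$. Viewing $F_d+K_{r-2}$ as the join of $F_d+K_{r-3}$ with a new vertex $z$, I would set $u\mapsto z$ (automatically compatible with every edge at $u$); the only thing remaining is to extend $g$ over the set $Z=V(G)\setminus N[u]$ of non-neighbours of $u$.

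I expect this extension to be the main obstacle, and it is where the real work of \cite{MR2866729} --- and, via Jin's theorem, the hypothesis $d\le9$ --- is spent. In its favour, $Z$ is small, $|Z|<c_rn$, and every $x\in Z$ has almost all of its neighbours inside $N(u)$: $d_{N(u)}(x)>(1-2c_r)n>0$. One then has to show that each such $x$ can be given an image in $F_d+K_{r-2}$ that dominates the $g$-images of all of its neighbours, and that this can be arranged consistently across the (few) edges that lie inside $Z$; here the vertex-transitivity of $F_d$, its independence number $d$, and a pigeonhole control on the sizes of the classes $g^{-1}(\cdot)$ coming from the minimum-degree hypothesis are the inputs one would use. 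Making this absorption precise, while keeping each of the strict threshold inequalities above from degenerating, is the crux; I expect the clean neighbourhood reduction to be routine by comparison.
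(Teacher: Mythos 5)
The paper does not prove this theorem: it is imported verbatim from Nikiforov's book \cite{MR2866729}, and no proof is given in the manuscript, so there is nothing internal to compare your sketch against.

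Your reduction is set up correctly as far as it goes. The arithmetic $1-c_2=\frac{d}{3d-1}$ is right, and since $\frac{d}{3d-1}>\frac{d+1}{3d+2}$ the base case $r=2$ does indeed fall out of Jin's theorem. The $d=1$ side case via Lemma~\ref{Kr-free r-1 partite} is also correct: $1-c_r=\frac{r-1}{r}>\frac{3r-4}{3r-1}$, so the lemma (applied to a $K_{r+1}$-free graph) forces $\chi(G)\le r$, i.e.\ a homomorphism to $K_r=F_1+K_{r-2}$. The neighbourhood count for $H=G[N(u)]$ and the identity $\frac{c_r}{c_{r-1}}=1-c_r$ both check out, so $H$ satisfies the induction hypothesis with parameter $r-1$, and sending $u\mapsto z$ is compatible with every edge at $u$.

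However, what you submitted is not a proof. You stop precisely where the theorem starts being non-trivial: extending the homomorphism $g\colon N(u)\to F_d+K_{r-3}$ over $Z=V(G)\setminus N[u]$ consistently, while respecting the edges inside $Z$ and between $Z$ and $N(u)$. You flag this as ``the main obstacle'' and ``the crux,'' and you are right that it is --- but you give no argument for why each $x\in Z$ can be assigned an image dominating the $g$-images of all its neighbours, nor why those assignments can be made mutually compatible. This is not a loose end that a referee could fill in; it is the substance of Nikiforov's argument and of the Andr\'asfai--Erd\H{o}s--S\'os/H\"aggkvist/Jin line that he builds on (and where the restriction $d\le 9$ actually enters). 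So the reduction is a correct and clearly-written preamble, but the proof itself is missing.
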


Next we will need a lemma which describes the properties of these graphs under certain minimum degree conditions.
\begin{lemma}\label{min lemma}
Suppose $d\ge 2$ and the vertices of $F_d+K_1$ are weighted by reals, we label the vertices in $F_d$ by $\{0,1,\dots, 3d-2\}$ and the vertex in $K_1$ by $3d-1,$ such that vertex i has weight $x_i$, where $0\le x_i\le 1$ and $\sum_{i=0}^{3d-1}x_i=1$. Write 
  $g_i=\sum_{j:j\sim i}x_j$ and $e=\frac{1}{2}\sum_{i}x_{i}g_{i}.$ Suppose that $g_i\ge \gamma$ for each $i\in\mathbb{Z}_{3d}.$ Then
\begin{align*}
    \gamma &\le\frac{3d-1}{5d-2},\\
    e&\le\frac{1}{6}(125d^2\gamma^2-150d^2\gamma+45d^2-175d\gamma^2+200d\gamma-57d+50\gamma^2-50\gamma+14).
\end{align*}
\end{lemma}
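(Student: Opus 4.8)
The plan is to treat $e$ as a quadratic form in the weight vector $(x_0,\dots,x_{3d-1})$ subject to the linear constraints $\sum_i x_i=1$, $0\le x_i\le 1$, and $g_i\ge\gamma$ for every vertex $i$, and to extract the stated bounds from the structure of $F_d+K_1$. First I would record the obvious fact that the apex vertex $3d-1$ is adjacent to all of $\mathbb{Z}_{3d-1}$, so $g_{3d-1}=\sum_{j=0}^{3d-2}x_j=1-x_{3d-1}$, and each vertex $i$ in $F_d$ has $g_i=x_{3d-1}+\sum_{j\sim i,\,j\in F_d}x_j$. Summing the constraint $g_i\ge\gamma$ over all $3d$ vertices gives $\sum_i g_i\ge 3d\gamma$; on the other hand $\sum_i g_i=2\sum_{i\sim j}x_ix_j$-type double counting needs care, but more usefully, since $F_d$ is $d$-regular and the apex has degree $3d-1$, summing $g_i$ counts each weight $x_j$ with multiplicity $\deg(j)$, so $\sum_{i\in F_d} g_i = (3d-1)x_{3d-1} + d\sum_{j\in F_d} x_j = (3d-1)x_{3d-1}+d(1-x_{3d-1})$, plus $g_{3d-1}=1-x_{3d-1}$. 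Setting $s:=x_{3d-1}$, the inequality $\sum_i g_i\ge 3d\gamma$ becomes $(3d-1)s + d(1-s) + (1-s) \ge 3d\gamma$, i.e. $(2d-2)s + d + 1 \ge 3d\gamma$. To get the cleanest bound on $\gamma$ one wants $s$ as large as possible, but $s$ is also constrained by the other $g_i\ge\gamma$; I expect the extremal configuration to be the symmetric one where all of $V(F_d)$ carry equal weight $(1-s)/(3d-1)$, in which case $g_i = s + d\cdot\frac{1-s}{3d-1}$ for $i\in F_d$, and the binding constraints $g_i\ge\gamma$ and $g_{3d-1}=1-s\ge\gamma$ together pin down $s$ and yield $\gamma\le\frac{3d-1}{5d-2}$ after simplification.

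For the bound on $e$, I would write $e=\frac12\sum_i x_i g_i = s(1-s) + \frac12\sum_{i\in F_d} x_i\bigl(g_i - s\bigr) \cdot(\text{correction})$ — more precisely $e = s g_{3d-1}/\text{(apex contributes } s(1-s)) + e(F_d\text{-part weighted})$, where the $F_d$-internal part is $\frac12\sum_{i\in F_d}\sum_{j\sim i,\,j\in F_d}x_ix_j$. The strategy is then a Lagrange/convexity argument: for fixed value of $s$ and fixed $\sum_{i\in F_d}x_i = 1-s$, the internal quadratic form on $F_d$ is maximized (given all the lower-bound constraints $g_i\ge\gamma$ are active or the weights are spread out) at the symmetric point, because $F_d$ is vertex-transitive and the quadratic form $\sum_{i\sim j}x_ix_j$ over the simplex $\{\sum x_i = c\}$ is Schur-convex/concave in a way that, combined with the floor constraints $g_i\ge\gamma$, forces the balanced distribution. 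Plugging $x_i=(1-s)/(3d-1)$ for $i\in F_d$ gives the internal edge-weight $\frac12\cdot(3d-1)\cdot\frac{(1-s)}{3d-1}\cdot d\cdot\frac{(1-s)}{3d-1} = \frac{d(1-s)^2}{2(3d-1)}$, so $e = s(1-s) + \frac{d(1-s)^2}{2(3d-1)}$; then I would substitute the value of $s$ coming from $g_i=\gamma$ at the symmetric point, namely $s + \frac{d(1-s)}{3d-1} = \gamma$, solve $s = \frac{(3d-1)\gamma - d}{2d-1}$, and substitute into the formula for $e$. Collecting terms and clearing the denominator $(2d-1)^2$ (and noting $6(2d-1)^2$ or a related factor matches the $\frac16$ in the claim) should reproduce exactly the displayed cubic-looking polynomial $\frac16(125d^2\gamma^2 - 150d^2\gamma + 45d^2 - 175d\gamma^2 + 200d\gamma - 57d + 50\gamma^2 - 50\gamma + 14)$.

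The main obstacle, and the step deserving the most care, is justifying that the symmetric weighting on $V(F_d)$ is genuinely extremal — i.e. that among all weight vectors satisfying $g_i\ge\gamma$ for all $i$ with prescribed apex weight $s$, the one maximizing $e$ is the balanced one. This is not automatic: maximizing a quadratic form over a polytope generically puts the optimum at a vertex of the polytope, not the center, so one must use the specific adjacency structure of $F_d$ (the complement of the $(d-1)$-st power of $C_{3d-1}$) to argue that any imbalance either violates some $g_i\ge\gamma$ or can be symmetrized without decreasing $e$. I would handle this with a smoothing/compression argument: if $x_a\ne x_b$ for two vertices in the same orbit under a cyclic rotation of $F_d$, average the configuration with its rotation; since $e$ is invariant under the automorphism and is concave along the averaging segment on the relevant subspace (or at worst one checks the second-order term has the right sign using that $F_d$'s adjacency eigenvalues other than the top one are bounded suitably), the averaged configuration has $e$ at least as large and still satisfies all constraints. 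The remaining parts — the double-counting identity $\sum_i g_i = \sum_i \deg(i)\,x_i$, solving the two linear relations for $s$, and the final polynomial algebra — are routine, though the last one is genuinely lengthy and I would organize it as a single substitution followed by expansion, trusting a symbolic check for the coefficients. One should also verify the $d\ge2$ hypothesis is used exactly where needed (it guarantees $2d-1>0$ so the expression for $s$ is well-defined, and $2d-2\ge 2>0$ in the $\gamma$-bound argument), and that the range $\gamma\le\frac{3d-1}{5d-2}$ ensures $s\ge 0$ so the symmetric configuration is admissible.
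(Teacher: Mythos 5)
Your approach has a fatal gap at precisely the step you identify as the ``main obstacle'': the symmetric weighting on $V(F_d)$ is \emph{not} extremal for $e$, so the formula you would obtain is not an upper bound at all. A concrete counterexample at $d=2$, $\gamma=0.6$: your symmetric point puts $s=x_5=\tfrac{(5\gamma-2)}{3}=\tfrac13$ and all $x_i=\tfrac{2}{15}$ on $C_5$, giving $e=s(1-s)+\tfrac{(1-s)^2}{5}=\tfrac{14}{45}\approx 0.3111$. But the weighting $x_5=0.4$, $(x_0,\dots,x_4)=(0.2,0.2,0.1,0,0.1)$ still satisfies every $g_i\ge 0.6$ (indeed $g_2=g_3=g_4=g_5=0.6$ exactly, $g_0=g_1=0.7$) and gives $e=0.24+0.08=0.32>0.3111$. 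Two separate things went wrong: first, the constraints $g_i\ge\gamma$ on the $F_d$-vertices only force a \emph{lower} bound on $s$, and $e$ increases as $s$ increases toward $1-\gamma$ (along the symmetric family, $\tfrac{de}{ds}=\tfrac{3-8s}{5}>0$ at your $s$); second, once $s$ is pushed up, the remaining mass on $F_d$ acquires slack and wants to \emph{concentrate} on an edge rather than spread out. Your proposed smoothing argument cannot be repaired: averaging over the cyclic automorphism of $F_d$ moves toward the barycentre, but the edge form $\sum_{i\sim j}x_ix_j$ on the simplex has indefinite Hessian, and the binding inequalities $g_i\ge\gamma$ do not make it concave in the feasible directions -- as the example shows, the optimum is off-center and has some constraints slack.

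The paper's actual route is essentially opposite to symmetrization. It first uses the constraints $g_i\ge\gamma$ only to derive pointwise linear lower bounds $x_i\ge 5\gamma-3$ (for $i\in V(F_d)$) and $x_{3d-1}\ge 1-\gamma$, then performs the substitution $x_i=(5\gamma-3)+ty_i$, $x_{3d-1}=(1-\gamma)+ty_{3d-1}$ with $t=3d-1-\gamma(5d-2)$. After this shift, the neighborhood constraints are no longer needed; one is left to bound $\sum_{i\sim j}y_iy_j$ subject only to $y_i\ge 0$ and $\sum y_i$ fixed, and the key fact (quoted from \cite{MR2274084}, a Motzkin--Straus-type result) is that this is maximized when the positive $y_i$'s form a \emph{clique} -- an edge, since $F_d$ is triangle-free. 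Concentration, not equipartition, is what's extremal. Your $\gamma$-bound argument via double-counting $\sum_i g_i=\sum_j\deg(j)x_j$ and the binding $g_{3d-1}\ge\gamma$ is fine and is a valid (if slightly different from the paper's) way to get $\gamma\le\tfrac{3d-1}{5d-2}$; the $e$-bound is the part that needs to be scrapped in favor of a change-of-variables plus clique-concentration argument.
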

\begin{proof}
   Note that the vertex $3d-1$ is adjacent to all vertices in $F_d$ and every $i\in\mathbb{Z}_{3d-1}$ is adjacent to exactly one element of $\{0, 1, 2\},$ apart from $1$, which is adjacent to both $0$ and $2.$ Therefore,
  \begin{align*}
   5\gamma &\le g_0+g_1+g_2+2g_{3d-1}\\
    &=x_1+\sum_{i=0}^{3d-2}x_i+3x_{3d-1}+2\sum_{i=0}^{3d-2}x_i\\
    &=x_1+3,
  \end{align*}

  so $x_1\ge5\gamma -3.$ Similarly, $x_i\ge5\gamma-3$ for $i\in\{0,1,\ldots,3d-2\}.$ Also we have
  \begin{align*}
    (5d-7)\gamma&\le g_3+g_4+\cdots+g_{3d-2}+(2d-3)g_{3d-1}\nonumber\\
        &=\sum_{i=0}^{3d-2}g_i-(g_0+g_1+g_2)+(2d-3)\sum_{i=0}^{3d-2}x_i \\
        &=d\sum_{i=0}^{3d-2}x_i+(3d-1)x_{3d-1}-x_1-3+2\sum_{i=0}^{3d-2}x_i+(2d-3)\sum_{i=0}^{3d-2}x_i\\
        &=3d-4-x_1,
  \end{align*}
  so $x_1\le 3d-4-(5d-7)\gamma.$ Combining these two inequalities, we have $3d-1\ge(5d-2)\gamma.$ Set
  \[
    y_i=\frac{x_i-(5\gamma -3)}{3d-1-\gamma(5d-2)}, ~~~~  for ~ i=0,1,\dots,3d-2
  \]
  and
  \[
    y_{3d-1}=\frac{x_{3d-1}-(1-\gamma)}{3d-1-\gamma(5d-2)}.
  \]
  In case of $3d-1-\gamma(5d-2)=0$, we will have that $x_i-(5\gamma -3)=0$ for $i=0,1,\dots,3d-2$ and $x_{3d-1}-(1-\gamma)=0$, so we set $y_i=0$ for all $i$ and it will cause no confusion. Then $0\le y_i\le 1$ for $i\in\{0,1,\dots,3d-2\}$ and $\frac{-(1-\gamma)}{3d-1-\gamma(5d-2)}\le y_{3d-1}\le 0,$ and
  \[
    \sum_{i=0}^{3d-1}y_i = \frac{\sum_{i=0}^{3d-1}x_i-(3d-1)(5\gamma-3)-1+\gamma}{3d-1-\gamma(5d-2)}=3.
  \]
  For convenience, we set $t=3d-1-\gamma(5d-2)\ge0,$ and write
  \begin{align*}
    e =& \sum_{i\sim j}x_i x_j = \sum_{\substack{i\sim j\\i,j=0}}^{3d-2}x_ix_j +x_{3d-1}\sum_{i=0}^{3d-2}x_i \\
    =&\sum_{\substack{i\sim j\\i,j=0}}^{3d-2}(5\gamma-3+ty_i)(5\gamma-3+ty_j)+(1-\gamma+ty_{3d-1})\sum_{i=0}^{3d-2}(5\gamma-3+ty_i)\\
    =&\sum_{\substack{i\sim j\\i,j=0}}^{3d-2}(5\gamma-3)^2+\sum_{i=0}^{3d-2}y_i\sum_{\substack{j:j\sim i\\j=0}}^{3d-2}(5\gamma-3)t \\
    &+t^2\sum_{\substack{i\sim j\\i,j=0}}^{3d-2}y_iy_j+(1-\gamma+ty_{3d-1})[(3d-1)(5\gamma-3)+t(3-y_{3d-1})]  \\
    =&\frac{1}{2}d(3d-1)(5\gamma-3)^2+(5\gamma-3)td(3-y_{3d-1})\\
    &+t^2\sum_{\substack{i\sim j\\i,j=0}}^{3d-2}y_iy_j+(1-\gamma)[(3d-1)(5\gamma-3)+3t]+t[(3d-1)(5\gamma-3)+3t]y_{3d-1}\\
    &-(1-\gamma)ty_{3d-1}-t^2y_{3d-1}^{2}\\
    =&\frac{1}{2}d(3d-1)(5\gamma-3)^2+3dt(5\gamma-3)+(1-\gamma)[(3d-1)(5\gamma-3)+3t]\\
    &-t^2y_{3d-1}^{2}+(t((3d-1)(5\gamma-3)+3t)-dt(5\gamma-3)-(1-\gamma)t)y_{3d-1}+t^2\sum_{\substack{i\sim j\\i,j=0}}^{3d-2}y_iy_j.
  \end{align*}
  Since $\sum_{i=0}^{3d-2}y_i=3-y_{3d-1}, y_i\ge 0$ for $i\in \{0,1,\dots,3d-2\}$, it is known in \cite{MR2274084} that the maximum of $\sum_{\substack{i\sim j\\i,j=0}}^{3d-2}y_iy_j$ subject only to the previous condition is achieved when the vertices with $y_i>0$ form a clique in the graph. $F_d$ is triangle free, so this clique is just an edge.
  Then $\sum_{\substack{i\sim j\\i,j=0}}^{3d-2}y_iy_j\le(\frac{3-y_{3d-1}}{2})^2.$ Therefore
  \begin{align*}
    e \le& \frac{1}{2}d(3d-1)(5\gamma-3)^2+3dt(5\gamma-3)+(1-\gamma)[(3d-1)(5\gamma-3)+3t]+\frac{9}{4}t^2\\
     &- \frac{3}{4}t^2y_{3d-1}^2  \\
     &+ (t((3d-1)(5\gamma-3)+3t)-dt(5\gamma-3)-(1-\gamma)t-\frac{3}{2} t^2)y_{3d-1}.
  \end{align*}
  Substituting $t=3d-1-\gamma(5d-2)$ into the above inequality and a careful calculating shows that the coefficient of $y_{3d-1}$ is $-\frac{1}{2}t^2$. So we have
  \begin{align*}
    e\le&\frac{1}{2}d(3d-1)(5\gamma-3)^2+3dt(5\gamma-3)+(1-\gamma)[(3d-1)(5\gamma-3)+3t]+\frac{9}{4}t^2\\
      &-\frac{1}{4}t^2(3y_{3d-1}^2+2y_{3d-1})\\
      =&\frac{1}{4}(75d^2\gamma^2-90d^2\gamma+27d^2-110d\gamma^2+126d\gamma-36d+32\gamma^2-32\gamma+9)\\
      &-\frac{1}{4}t^2(3y_{3d-1}^2+2y_{3d-1}).
  \end{align*}
  Since $\frac{-(1-\gamma)}{3d-1-\gamma(5d-2)}\le y_{3d-1}\le 0,$ we get that $-\frac{1}{4}t^2(3y_{3d-1}^2+2y_{3d-1})\le\frac{1}{12}t^2.$ All in all, we have that $e\le\frac{1}{6}(125d^2\gamma^2-150d^2\gamma+45d^2-175d\gamma^2+200d\gamma-57d+50\gamma^2-50\gamma+14).$\\

  For the case $d=2$, we can improve the upper bound slightly. By Theorem \ref{homomorphic}, $G$ is homomorphic to $F_2+K_1,$ which is a $5$-wheel. Set $z_i=1-y_i$ for $i\in\{0,1,\dots,4\},$ so that $0\le z_i\le 1$ and $\sum_{i=0}^{4}z_i=5-\sum_{i=0}^{4}y_i=5-(3-y_5)=2+y_5.$ Then
  \[
    \sum_{\substack{i\sim j\\i,j=0}}^{4}y_iy_j=\sum_{\substack{i\sim j\\i,j=0}}^{4}(1-z_i)(1-z_j)=5-2\sum_{i=0}^{4}z_i+\sum_{\substack{i\sim j\\i,j=0}}^{4}z_iz_j=1-2y_5+\sum_{\substack{i\sim j\\i,j=0}}^{4}z_iz_j.
  \]
  By the above argument $\sum_{\substack{i\sim j\\i,j=0}}^{4}z_iz_j\le(\frac{2+y_5}{2})^2=1+y_5+\frac{1}{4}y_5^2,$ we have$\sum_{\substack{i\sim j\\i,j=0}}^{4}y_iy_j\le 2-y_5+\frac{1}{4}y_5^2.$ Therefore,
  \begin{align*}
    e \le& \frac{1}{2}d(3d-1)(5\gamma-3)^2+3dt(5\gamma-3)+(1-\gamma)[(3d-1)(5\gamma-3)+3t]+2t^2\\
      &-\frac{3}{4}t^2y_5^2\\
      \le&\frac{1}{2}(25d^2\gamma^2-30d^2\gamma+9d^2-45d\gamma^2+52d\gamma-15d+14\gamma^2-14\gamma+4).
  \end{align*}
  Substituting $d=2$ into the above inequality gives $e\le12\gamma^2-15\gamma+5.$ This completes the proof.

\end{proof}

In order to derive Lemma \ref{condition of d} from Lemma \ref{min lemma}, we need the following proposition.
\begin{proposition}
  Let $d\ge 2$ and suppose that a graph $G$ is homomorphic to $F_d+K_1,$ with parts $\{V_0,V_1.\dots,V_{3d-2}\}+V_{3d-1},$ but not homomorphic to $F_i+K_1$ for any $i<d,$  then the homomorphic is surjective.
\end{proposition}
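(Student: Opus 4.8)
The plan is to prove the contrapositive: if the homomorphism $f\colon V(G)\to V(F_d+K_1)$ is \emph{not} surjective, then $G$ is in fact homomorphic to $F_i+K_1$ for some $i<d$, contradicting the hypothesis. So suppose some part is empty. First I would treat the two cases according to which kind of part is missing. If $V_{3d-1}=\emptyset$, then $f$ maps $G$ into $F_d$ itself, which is triangle-free and of chromatic number $3$ (in fact homomorphic to $F_1+K_1=K_3$), hence $G$ is homomorphic to $F_1+K_1$, and $1<d$ since $d\ge 2$; done. So we may assume $V_{3d-1}\ne\emptyset$ and some part $V_k$ with $k\in\mathbb Z_{3d-1}$ is empty.

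Next I would analyze what $F_d$ minus a vertex looks like. Recall $F_d$ is the complement of $C_{3d-1}^{d-1}$, equivalently $v\sim v+1,v+4,v+7,\dots$ (the residues $\equiv 1\pmod 3$, up to $3d-2$), and it is a $d$-regular triangle-free vertex-transitive graph on $3d-1$ vertices. By vertex-transitivity we may assume the missing part is $V_{0}$. The key structural claim I need is that the induced subgraph $F_d - \{0\}$ is homomorphic to $F_{d-1}$ (equivalently, that $F_d$ with one vertex deleted retracts onto a copy of $F_{d-1}$ sitting inside it). This should follow from the explicit circulant description: deleting vertex $0$ and suitably identifying two of its former neighbours (or folding an appropriate pair of non-adjacent vertices) collapses the cycle-complement on $3d-1$ points down to one on $3(d-1)-1=3d-4$ points with the analogous adjacency pattern. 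I would verify this by exhibiting the identification map explicitly on $\mathbb Z_{3d-1}\setminus\{0\}\to\mathbb Z_{3d-4}$ and checking it preserves edges — a short finite check. Once this is in hand, composing with $f$ shows $G$ is homomorphic to $(F_d-\{0\})+K_1$ and hence to $F_{d-1}+K_1$, contradicting minimality of $d$.

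I would organize the write-up as: (1) reduce to the case $V_{3d-1}\neq\emptyset$, handling $V_{3d-1}=\emptyset$ separately as above; (2) use vertex-transitivity of $F_d$ to normalize the empty part to $V_0$; (3) prove the folding lemma $F_d-\{v\}\to F_{d-1}$ by an explicit homomorphism; (4) conclude by composition. The main obstacle I anticipate is step (3): getting the explicit fold right and checking that no edge of $F_d-\{0\}$ is mapped to a non-edge (or a loop) of $F_{d-1}$. The bookkeeping is entirely about residues modulo $3d-1$ versus $3d-4$, so the cleanest approach is probably to describe $F_d$ via its neighbourhood pattern $\{1,4,7,\dots,3d-5,3d-2\}$ and show the map that sends the two vertices $3d-3$ and $3d-2$ (the "end" of the chain, which became adjacent-to-nothing-new after deleting $0$) onto a single vertex is a valid homomorphism, then argue it lands in a subgraph isomorphic to $F_{d-1}$; vertex-transitivity then removes the dependence on which vertex was deleted. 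Alternatively, one can avoid an explicit fold by a counting/extremal argument: $F_d-\{v\}$ has $3d-2$ vertices and minimum degree $d-1$, and $\tfrac{d-1+1}{3(d-1)+2}(3d-2)=\tfrac{d(3d-2)}{3d-1}<d-1$ is false for small $d$, so that route needs care — I would fall back on it only if the explicit fold proves messier than expected, and otherwise keep the direct construction.
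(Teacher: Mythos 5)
Your overall architecture matches the paper's: argue by contrapositive, split according to whether the missing part is the apex $V_{3d-1}$ or some $V_k$ with $k\in\mathbb Z_{3d-1}$, and in the latter case use the cyclic symmetry of $F_d$ to normalize $k$, then exhibit an explicit fold onto $F_{d-1}$. Your treatment of the apex case ($G\to F_d\to K_3=F_1+K_1$ via $\chi(F_d)=3$) is a genuinely different, and in fact shorter, route than the paper's unified fold — but it leans on the 3-chromaticity of the Andr\'{a}sfai graph $F_d$, which is true but is not established anywhere in the paper, so it would need a citation or a short proof.

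The real problem is in step (3), and you were right to flag it as the main obstacle. The specific fold you propose — after deleting vertex $0$, identify $3d-3$ and $3d-2$ — fails for two separate reasons. First, $3d-3$ and $3d-2$ differ by $1$, and $v\sim v+1$ in $F_d$, so you would be merging \emph{adjacent} vertices, creating a loop; a homomorphism must send edges to edges, so this is not allowed. Second, even an admissible single merge only takes you from $3d-2$ vertices down to $3d-3$, while $F_{d-1}$ has only $3d-4$ vertices, so one identification cannot suffice: you must identify two disjoint non-adjacent pairs. The paper's fold does exactly that. Normalizing the missing vertex to $3d-2$ (equivalently, after deleting $0$, use the shift $j\mapsto j-1$), it sends $j\mapsto j\bmod(3d-4)$, which identifies $3d-4$ with $0$ and $3d-3$ with $1$; both pairs differ by $3d-4\equiv 2\pmod 3$, hence are non-edges of $F_d$, and one checks directly (using that $u\sim w$ in $F_m$ iff $(u-w)\bmod(3m-1)\equiv 1\pmod 3$) that every edge of $F_d-\{3d-2\}$ is carried to an edge of $F_{d-1}$. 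Your fallback counting route via Jin's theorem indeed does not close the gap either, as you yourself compute: $\delta(F_d-\{v\})\ge d-1<\tfrac{d(3d-2)}{3d-1}$, so the minimum-degree hypothesis fails. So as written the proposal is missing the one step that actually carries the argument, and the concrete fold it suggests is incorrect.
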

\begin{proof}
  Let $f$ be a homomorphism from $G$ to $F_d+K_1.$ Assume that $f$ is not surjective, then there exists some $i\in\{0,1,\dots,3d-1\}$ such that $i$ has no preimage.
  \begin{itemize}
    \item If $i\in\{0,1,\dots,3d-2\},$ we set $i=3d-2$ by symmetry. Now define
    \begin{equation*}
      g:(F_d+K_1)\backslash \{3d-2\}\rightarrow F_{d-1}+K_1
    \end{equation*}
    such that $g(j)=j$ for $0\le j\le 3d-5,$ $g(3d-4)=0,$ $g(3d-3)=1,$ and $g({3d-1})=3(d-1)-1.$ It is easy to verify that $g$ is a homomorphism. So $g\circ f$ is a homomorphism from $G$ to $F_{d-1}+K_1,$ which is a contradiction.
    \item If $i=3d-1,$ we define
    \begin{equation*}
      g:(F_d+K_1)\backslash \{3d-1\}\rightarrow F_{d-1}+K_1
    \end{equation*}
    such that $g(j)=j$ for $0\le j\le 3d-5,$ $g(3d-4)=0,$ $g(3d-3)=1$ and $g({3d-1})={3(d-1)-1}.$ It is also easy to check that $g$ is a homomorphism. Similarly, $g\circ f$ is a homomorphism from $G$ to $F_{d-1}+K_1,$ which is a contradiction.
  \end{itemize}
\end{proof}

The following lemma follows directly.
\begin{lemma}\label{condition of d}
  Let $G$ be a graph on $n$ vertices with minimum degree $\delta(G)\ge\gamma n$. Suppose that $G$ is homomorphic to $F_d+K_1$, with parts $\{V_0,V_1.\dots,V_{3d-2}\}+V_{3d-1},$ but not homomorphic to $F_i+K_1$ for any $i<d,$  then
  \begin{enumerate}[(1)]

    \item If $d=2$ then $\gamma\le\frac{5}{8}$ and $n^{-2}e\le12\gamma^2-15\gamma+5.$ 
    \item If $d=3$ then $\gamma\le\frac{8}{13}$ and $n^{-2}e\le\frac{1}{3}(325\gamma^2-400\gamma+124).$
    \item If $d=4$ then $\gamma\le\frac{11}{18}$ and $n^{-2}e\le225\gamma^2-275\gamma+\frac{253}{3}.$
 \end{enumerate}
\end{lemma}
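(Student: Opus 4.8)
The plan is to obtain this as a direct corollary of Lemma \ref{min lemma} and the preceding Proposition, handling the three values $d=2,3,4$ by substitution. The role of the Proposition is to guarantee that the homomorphism $f:V(G)\to V(F_d+K_1)$ is surjective, so that every part $V_j$ is nonempty; this is what lets us feed the minimum degree hypothesis into Lemma \ref{min lemma}.

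Concretely, I would set $x_j=|V_j|/n$ for $j\in\{0,1,\dots,3d-1\}$, so that $0\le x_j\le 1$ and $\sum_j x_j=1$. Fix $j$ and (using surjectivity) pick any $v\in V_j$. Since $G$ is homomorphic to $F_d+K_1$, all neighbours of $v$ lie in parts $V_i$ with $i\sim j$ in $F_d+K_1$, so
\[
\gamma n\le\delta(G)\le d_G(v)\le\sum_{i\sim j}|V_i|=n\,g_j,\qquad g_j:=\sum_{i\sim j}x_i,
\]
which gives $g_j\ge\gamma$ for every $j\in\mathbb{Z}_{3d}$ — exactly the hypothesis of Lemma \ref{min lemma}. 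In the same way, every edge of $G$ joins two distinct parts $V_i,V_j$ with $i\sim j$, so $e(G)\le\sum_{i\sim j}|V_i||V_j|=n^2\sum_{i\sim j}x_ix_j=n^2 e$, where $e$ now denotes the weighted quantity of Lemma \ref{min lemma}. Hence $n^{-2}e(G)\le e$.

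Now I would just invoke Lemma \ref{min lemma}, which yields $\gamma\le\frac{3d-1}{5d-2}$ and the polynomial bound on $e$ (with the sharper bound $e\le 12\gamma^2-15\gamma+5$ when $d=2$). Substituting $d=2,3,4$ into $\frac{3d-1}{5d-2}$ gives $\frac58,\frac{8}{13},\frac{11}{18}$ respectively; for $d=2$ the improved inequality gives $n^{-2}e(G)\le 12\gamma^2-15\gamma+5$ directly, and substituting $d=3$ and $d=4$ into $\frac16\bigl(125d^2\gamma^2-150d^2\gamma+45d^2-175d\gamma^2+200d\gamma-57d+50\gamma^2-50\gamma+14\bigr)$ and simplifying gives $\frac13(325\gamma^2-400\gamma+124)$ and $225\gamma^2-275\gamma+\frac{253}{3}$. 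Combining with $n^{-2}e(G)\le e$ finishes all three cases.

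There is essentially no obstacle: the only points requiring care are that $g_j$ is an \emph{upper} bound for the degree density (so the deduced inequality is $g_j\ge\gamma$, not equality, since the neighbourhood of $v$ need not exhaust $\bigcup_{i\sim j}V_i$), the use of surjectivity to ensure the bound holds for \emph{all} $j$, and the routine arithmetic of substituting $d=2,3,4$ into the degree-two-in-$\gamma$ polynomial from Lemma \ref{min lemma} — which should be checked but comes out exactly as stated.
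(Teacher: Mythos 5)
Your proposal is correct and takes exactly the route the paper intends: the paper introduces the surjectivity Proposition and Lemma~\ref{min lemma} immediately before this statement precisely so that it ``follows directly,'' which is what you have written out. The translation $x_j=|V_j|/n$, the deduction $g_j\ge\gamma$ from a vertex in each (nonempty, by surjectivity) part, the bound $n^{-2}e(G)\le\sum_{i\sim j}x_ix_j$, and the substitution of $d=2,3,4$ into the polynomial of Lemma~\ref{min lemma} (with the sharper $d=2$ bound) all check out.
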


Now we can prove the following weak upper bound for $\delta_4.$
\begin{theorem}
  Suppose $G$ is a graph with minimum degree $\frac{49}{52}n+1$. Then the largest $K_4$-free and the largest tripartite subgraphs of $G$ have equal size. Therefore, $\delta_4\le\frac{49}{52}.$
\end{theorem}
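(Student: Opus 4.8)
The plan is a proof by contradiction that adapts the Section~2 argument and the techniques of Balogh--Keevash--Sudakov. Put $\delta=\tfrac{49}{52}$ and assume $\delta(G)\ge\delta n+1$; let $H$ be a largest $K_4$-free subgraph of $G$ (we may take $V(H)=V(G)$) and $A$ a largest tripartite subgraph, so $e(H)\ge e(A)$, and suppose for contradiction that the inequality is strict. By the first (random-partition) lemma of Section~2 together with $e(G)\ge\tfrac12\delta(G)n$ we get $e(A)\ge\tfrac23e(G)\ge\tfrac13(\delta n+1)n$, hence $e(H)>\tfrac13(\delta n+1)n$; the goal is to produce a tripartite subgraph of $G$ with more than $e(A)$ edges.

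Next I would \emph{clean} $H$: while the current graph has a vertex of degree at most $\tfrac{8}{13}$ times its current order, delete it; this stops at an induced subgraph $M\subseteq H$ on $m$ vertices with $\delta(M)>\tfrac{8}{13}m$, and summing the deleted degrees yields
\[
  e(H)\le e(M)+\tfrac{8}{13}\Bigl(\tbinom{n+1}{2}-\tbinom{m+1}{2}\Bigr).
\]
The threshold $\tfrac{8}{13}$ is chosen so that Theorem~\ref{homomorphic} applies (with $r=3$, $d=3$): since $M$ is $K_4$-free, $M$ is homomorphic to $F_3+K_1$, so by the surjectivity proposition preceding Lemma~\ref{condition of d} we may assume the homomorphism is surjective onto $F_d+K_1$ for the least $d\in\{1,2,3\}$. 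Lemma~\ref{condition of d}(2) would force $\delta(M)/m\le\tfrac{8}{13}$ if $d=3$, contradicting the cleaning, so $d\le 2$. If $d=2$, Lemma~\ref{condition of d}(1) gives $e(M)\le(12\gamma^2-15\gamma+5)m^2$ with $\gamma:=\delta(M)/m\in(\tfrac{8}{13},\tfrac58]$, whence $e(M)\le\tfrac{53}{169}m^2$; inserting this into the displayed bound, using $e(H)>\tfrac13(\delta n+1)n$ and $m\le n$, and clearing denominators (the crux being $169\delta-156=\tfrac{13}{4}>3$), one arrives at $\tfrac{13}{4}n^2<3m^2\le 3n^2$, which is impossible. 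Hence $d=1$: $M$ is itself a tripartite subgraph of $G$, with $e(M)\le\tfrac13m^2$.

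It remains to treat the tripartite case. If $m=n$ then $e(H)=e(M)\le e(A)$, contrary to assumption, so $m<n$. Plugging $e(M)\le\tfrac13m^2$ into the bound on $e(H)$ and comparing with $e(H)>\tfrac13(\delta n+1)n$ gives, since $52\delta=49$, that $4m^2>n^2+4n+48m$, hence $m>\tfrac n2$. Now set $R=V(G)\setminus V(M)$ and let $F$ be the set of edges of $G$ incident to $R$, so $|F|\ge(\delta n+1)(n-m)-\binom{n-m}{2}$; by Lemma~\ref{r-1 partite wiht more edges} (with $\Gamma=M$), $G$ has a tripartite subgraph with at least $e(M)+\tfrac23|F|$ edges. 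Since $e(H)\le e(M)+\tfrac{8}{13}\bigl(\binom{n+1}{2}-\binom{m+1}{2}\bigr)$, it suffices to verify $\tfrac23|F|\ge\tfrac{8}{13}\bigl(\binom{n+1}{2}-\binom{m+1}{2}\bigr)$; dividing by $n-m$ and simplifying, this becomes $(52\delta-50)n+2m+54\ge0$, i.e.\ $m\ge\tfrac n2-27$, which holds since $m>\tfrac n2$. Thus $G$ has a tripartite subgraph with more than $e(A)$ edges, contradicting the maximality of $A$. Therefore $e(H)=e(A)$, and $\delta_4\le\tfrac{49}{52}$.

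I expect the main difficulty to lie in balancing two opposing demands on the cleaning: the deleted vertices must take away so few edges that the large quantity $e(H)$ forces $m>\tfrac n2$, while those same deleted vertices must, inside $G$, carry enough edges to rebuild a tripartite graph beating $e(H)$ --- and $\tfrac{49}{52}$ is precisely the largest $\delta$ for which the threshold $\tfrac{8}{13}$ makes both requirements meet near $m=\tfrac n2$. A related subtlety is that one cannot afford to let $M$ be genuinely ``$C_5$-like'' ($d=2$): that case must be killed outright through the edge estimate in Lemma~\ref{condition of d}(1) before attempting any reconstruction, since passing to a tripartite subgraph of such an $M$ would lose too many edges. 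The remaining bookkeeping with the binomial coefficients is entirely routine.
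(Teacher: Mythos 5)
Your proposal is correct and follows essentially the same route as the paper: the same cleaning at threshold $\tfrac{8}{13}$, the same case analysis on the minimal $d$ with $\Gamma$ homomorphic to $F_d+K_1$ via Theorem \ref{homomorphic} and Lemma \ref{condition of d}, and the same reconstruction via Lemma \ref{r-1 partite wiht more edges}; your final numbers ($e(M)\le\tfrac{53}{169}m^2$ for $d=2$, and the thresholds $m>\tfrac n2$ versus $m\ge\tfrac n2-27$ for $d=1$) match the paper's exactly. The only cosmetic differences are that you kill $d=3$ directly from the degree bound $\gamma\le\tfrac{8}{13}$ rather than from the edge bound, and you package the $d=1$ contradiction as ``Tur\'an forces $m>n/2$, so the rebuild succeeds'' instead of the paper's contrapositive ``if the rebuild fails then $\alpha<\tfrac12-\tfrac{27}{n}$, contradicting Tur\'an.''
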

\begin{proof}

  Let $G$ be a graph with minimum degree $\delta\geq\frac{49}{52}n+1$. Then $e(G)\ge\frac{1}{2}\delta n$. We suppose that $P_3(G)<K_4f(G)$ and derive a contradiction.

  Let $H$ be a $K_4$-free subgraph of $G$ with $e(H)=K_4f(G)$ maximal, and write $e(H)=tn^2.$ Since $K_4f(G)>P_3(G)\ge\frac{2}{3}e(G)\geq\frac{\delta n}{3}$, we have that $t>\frac{\delta}{3n}$. As we did in the proof of Theorem \ref{general upper bound}, we construct a sequence of graphs $H=H_n,H_{n-1},\ldots,$ where if $H_k$ has a vertex of degree less than or equal to $\frac{8}{13}k$ we delete that vertex to obtain $H_{k-1}.$ Let $\Gamma$ be the final graph of this sequence and write $v(\Gamma)=\alpha n.$ Then $\Gamma$ is $K_4$-free with minimal degree $\delta(\Gamma)>\frac{8}{13}v(\Gamma)$ and $e(\Gamma)\ge e(H)-\frac{8}{13}(n+(n-1)+(n-2)+\ldots+(\alpha n+1))=tn^2-\frac{4}{13}(n+\alpha n+1)(n-\alpha n),$ i.e.
  \begin{equation}\label{upper bound of t}
    n^{-2}e(\Gamma)\ge t-\frac{4}{13}(1+\alpha+\frac{1}{n})(1-\alpha).
  \end{equation}
  As $\delta(\Gamma)>\frac{8}{13}\alpha n,$ by Theorem \ref{homomorphic}, $\Gamma$ is homomorphic to $F_d+K_1$ for some $d\le3.$ Choose $d$ so that $\Gamma$ is not homomorphic to $F_i+K_1$ for any $i<d.$ Now we focus on the evaluation of $d.$

 If $d=3,$ then by Lemma \ref{condition of d} (ii) with $\gamma=\frac{8}{13}$ we have
 \begin{equation*}
   n^{-2}e(\Gamma)\le\frac{4}{13}\alpha^2.
 \end{equation*}
 Together with (\ref{upper bound of t}), we have $t\le\frac{4}{13}\alpha^2+\frac{4}{13}(1+\alpha+\frac{1}{n})(1-\alpha).$ Since $t>\frac{\delta}{3n}$, it follows that $\delta<\frac{12}{13}(1+\frac{1}{n}-\frac{\alpha}{n})n\leq\max\limits_{0\le \alpha\le1}\frac{12}{13}(1+\frac{1}{n}-\frac{\alpha}{n})n=\frac{12}{13}n+\frac{12}{13}$, which contradicts that $\delta\geq\frac{49}{52}n+1$.

  If $d=2,$ then by Lemma \ref{condition of d} (i) with $\gamma=\frac{8}{13}$ we have
 \begin{equation*}
   n^{-2}e(\Gamma)\le\frac{53}{169}\alpha^2.
 \end{equation*}
 Similarly, together with (\ref{upper bound of t}), we have $t\le\frac{53}{169}\alpha^2+\frac{4}{13}(1+\alpha+\frac{1}{n})(1-\alpha).$ Since $t>\frac{\delta}{3n}$, it follows that $\delta<(\frac{3}{169}\alpha^2-\frac{12}{13n}\alpha+\frac{12}{13}(1+\frac{1}{n}))n\leq\max\limits_{0\le \alpha\le1}(\frac{3}{169}\alpha^2-\frac{12}{13n}\alpha+\frac{12}{13}(1+\frac{1}{n}))n=\max\{\frac{12}{13}n+\frac{12}{13},\frac{159}{169}n\}$, which contradicts that $\delta\geq\frac{49}{52}n+1$ as well.

  Therefore the only condition left is that $d=1,$ i.e. $\Gamma$ is $3$-partite (since $\delta(\Gamma)>\frac{8}{13}\alpha n,$ it is obvious that $\Gamma$ can not be bipartite). The number of edges of $G$ incident to vertices in $V(G)\backslash V(\Gamma)$ is
  \begin{align*}
    m=&\sum_{v\in V(G)\backslash V(\Gamma)}d(v)-e(V(G)\backslash V(\Gamma))\geq(1-\alpha)n\delta-{(1-\alpha)n\choose 2}.
  \end{align*}
  Applying Lemma \ref{r-1 partite wiht more edges}, we have
  \begin{align*}
    t =& n^{-2}K_4f(G)>n^{-2}P_3(G)\ge n^{-2}(e(\Gamma)+\frac{2}{3}m) \\
      >& t-\frac{4}{13}(1+\alpha+\frac{1}{n})(1-\alpha)+\frac{2}{3n^2}[(1-\alpha)n\delta-\frac{(1-\alpha)n((1-\alpha)n-1)}{2}].
  \end{align*}
  This gives that $\delta<3n(\frac{25}{78}-\frac{1}{78}\alpha-\frac{1}{78n})$. Because $\delta\geq\frac{49}{52}n+1$, it follows that $\frac{49}{52}n+1<3n(\frac{25}{78}-\frac{1}{78}\alpha-\frac{1}{78n})$, i.e. $\alpha<\frac{1}{2}-\frac{27}{n}$.

  On the other hand, by Tur\'{a}n's theorem, we have $e(\Gamma)\le v(\Gamma)^2/3=\frac{\alpha^2n^2}{3}.$ So by inequality (\ref{upper bound of t}), we have $t\le\frac{1}{3}\alpha^2+\frac{4}{13}(1+\alpha+\frac{1}{n})(1-\alpha)$. Since $t>\frac{\delta}{3n}$, it follows that $\delta<3n(\frac{1}{39}\alpha^2-\frac{4}{13n}\alpha+\frac{4}{13}(1+\frac{1}{n}))$. Thus $\frac{49}{52}n+1<3n(\frac{1}{39}\alpha^2-\frac{4}{13n}\alpha+\frac{4}{13}(1+\frac{1}{n}))$, i.e. $0<n\alpha^2-12\alpha-\frac{n}{4}-1$, which cannot hold for every $\alpha\in[0,\frac{1}{2}-\frac{27}{n})$. This completes the proof.
\end{proof}

\section{An improved bound for $\delta_4$}
In this section, we use the notation $a=b\pm c$ to denote $b-c<a<b+c.$
\begin{lemma}\label{llllemma}
Let $\delta=0.9415$ and suppose $1\geq t\geq\delta/3.$ Then there exists $\epsilon>0$ such that the followings hold with $\gamma=\frac{6t-2(2\delta-1)^2}{9+9t-12\delta}-\epsilon$:

(i) $11/18<\gamma<2t$, and

(ii) $3(2t-\gamma)(2-3\gamma)>(3\gamma-4\delta+2)^2.$\\
Suppose also that $t<((31-32\delta)^2+15)/48.$ Then,

(iii) if $\gamma\leq8/13$, then $t>\frac{1}{3}(325\gamma^2-400\gamma+124)$; and

%

(iv) the inequality
$$
(12\gamma^2-\frac{31}{2}\gamma+5)\alpha^2+\frac{1}{2}\gamma\geq t
$$
has no solution with $0\leq\alpha\leq1.$\\
\end{lemma}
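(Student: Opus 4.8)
The plan is to work throughout with the ``$\epsilon=0$'' value $\gamma_0=\frac{6t-2(2\delta-1)^2}{9+9t-12\delta}$, so that $\gamma=\gamma_0-\epsilon$, establishing each of (i)--(iv) first at $\gamma=\gamma_0$ (strictly, except that (ii) will turn out to be an \emph{equality} there) and then transferring to $\gamma=\gamma_0-\epsilon$ for sufficiently small $\epsilon>0$ by monotonicity or continuity. Two preliminaries drive everything. Since $t\ge\delta/3$, the denominators $9+9t-12\delta$ and $18t+18-24\delta$ are positive (at $t=\delta/3$ they equal $9(1-\delta)$ and $18(1-\delta)$, and both increase with $t$), so all cross-multiplications below are legitimate. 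Moreover a direct expansion gives
\[
3(2t-\gamma)(2-3\gamma)-(3\gamma-4\delta+2)^2=12t-(4\delta-2)^2-\gamma\,(18t+18-24\delta),
\]
which is affine and strictly decreasing in $\gamma$ and has its unique zero at exactly $\gamma=\gamma_0$; hence (ii) holds for \emph{every} $\epsilon>0$, and this is the one place that forces us to perturb $\gamma$ downward. Finally $\gamma_0$ is increasing in $t$ (its derivative is $72(\delta-1)^2/(9+9t-12\delta)^2>0$), and $\gamma_0(\delta/3)=\frac{8\delta-2}{9}$, using the cancellation $\delta-(2\delta-1)^2=(4\delta-1)(1-\delta)$.

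For (i): clearing the denominator, $\gamma_0<2t$ is equivalent to $2\bigl(3t-(2\delta-1)\bigr)^2>0$, which holds because $\delta<1$ forces $t\ge\delta/3>\frac{2\delta-1}{3}$. For the lower bound, monotonicity in $t$ reduces it to $\gamma_0(\delta/3)=\frac{8\delta-2}{9}>\frac{11}{18}$, i.e.\ to $\delta>\frac{15}{16}$, which holds for $\delta=0.9415$; the gap on the left is strictly positive, so shrinking $\epsilon$ keeps $11/18<\gamma<2t$. Part (iii) then follows from (i) alone: the parabola $q(\gamma)=\frac13(325\gamma^2-400\gamma+124)$ has vertex at $8/13$, hence is decreasing on $[\frac{11}{18},\frac{8}{13}]$, so any $\gamma$ with $\frac{11}{18}<\gamma\le\frac{8}{13}$ gives $q(\gamma)<q\bigl(\frac{11}{18}\bigr)=\frac{301}{972}<\frac{\delta}{3}\le t$, the middle inequality being $\delta>\frac{301}{324}$.

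Part (iv) is the crux. Its displayed left side is affine in $\alpha^2\in[0,1]$ with slope $12\gamma^2-\frac{31}{2}\gamma+5$, a quadratic in $\gamma$ with roots $5/8$ and $2/3$. The first key point is that the hypothesis $t<\bigl((31-32\delta)^2+15\bigr)/48$ is \emph{equivalent} to $\gamma_0<5/8$: after clearing denominators, each says $3t<16(2\delta-1)^2+45-60\delta$, because $16\bigl(16(2\delta-1)^2+45-60\delta\bigr)=(31-32\delta)^2+15$. So $\gamma<\gamma_0<5/8$, the slope is positive, the left side is maximised over $0\le\alpha\le1$ at $\alpha=1$ with value $12\gamma^2-15\gamma+5$, and (iv) becomes the statement $12\gamma^2-15\gamma+5<t$; by continuity in $\epsilon$ it suffices to prove $12\gamma_0^2-15\gamma_0+5<t$. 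Writing $\gamma_0=\frac{2A}{3B}$ with $A=3t-(2\delta-1)^2$, $B=3t+3-4\delta$, $w=2(1-\delta)$ (so that $B-A=w^2$), and clearing denominators, this reduces to $Q(B):=B^3-2wB^2+2w^2B-16w^4>0$. Now $Q'(B)=3B^2-4wB+2w^2$ has discriminant $-8w^2<0$, so $Q$ is strictly increasing; and $B=B(t)$ increases in $t$ with $B(\delta/3)=\frac32 w$. Hence on the whole range of $t$ we get $Q(B(t))\ge Q\bigl(\frac32 w\bigr)=\frac{15}{8}w^3-16w^4=w^3\bigl(\frac{15}{8}-16w\bigr)$, which is positive exactly when $w<\frac{15}{128}$, i.e.\ when $\delta>1-\frac{15}{256}=\frac{241}{256}=0.94140625$ --- true for $\delta=0.9415$. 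It then remains to choose $\epsilon>0$ smaller than $\gamma_0-\frac{11}{18}$ and small enough that $12(\gamma_0-\epsilon)^2-15(\gamma_0-\epsilon)+5<t$ still holds (possible, as this is a strict inequality at $\epsilon=0$); then (i)--(iv) hold simultaneously.

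The main obstacle is Part (iv): the key is the substitution $\gamma_0=2A/(3B)$ with $B-A=w^2$, under which $12\gamma_0^2-15\gamma_0+5-t$ collapses, after clearing denominators, into the single cubic $Q(B)$, combined with the observation that $Q$ and $B(t)$ are both monotone, so that the whole inequality reduces to $\delta>\frac{241}{256}$ --- which is precisely why the constant $0.9415$ appears in Theorem \ref{upper bound for r=4}. A secondary subtlety is the direction of the perturbation: because (ii) is an equality at $\gamma_0$ we must take $\gamma<\gamma_0$, and one has to check that this downward move does not destroy the strict inequalities in (i) and (iv), which at $t=\delta/3$ hold only by a margin of order $10^{-5}$; this is harmless since $\epsilon$ is allowed to depend on $t$.
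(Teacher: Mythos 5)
Your proof is correct, and for parts (iii) and (iv) it takes a genuinely different --- and cleaner --- route than the paper. Parts (i) and (ii) essentially coincide with the paper's treatment: both rest on $\frac{d\gamma_0}{dt}>0$ together with the endpoint value $\gamma_0(\delta/3)=\frac{8\delta-2}{9}$, and both obtain (ii) from the exact identity $3(2t-\gamma)(2-3\gamma)-(3\gamma-4\delta+2)^2=6s\epsilon$ with $s=3+3t-4\delta$ (your affine-in-$\gamma$ expansion is the same computation); your observation that $\gamma_0<2t$ reduces to $2(3t-(2\delta-1))^2>0$ replaces the paper's numerical check at $t=\delta/3$ by an identity. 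For (iii) the paper proves that $g_1(\gamma(t))=\frac13(325\gamma(t)^2-400\gamma(t)+124)$ is convex in $t$ (using $s<64(1-\delta)^2$) and then verifies the two endpoints $t=\delta/3$ and $t=t'$ numerically; you instead note that this parabola has its vertex at $8/13$, so on the interval $(11/18,\,8/13]$ forced by (i) it is bounded above by $g_1(11/18)=\frac{301}{972}<\frac{\delta}{3}\le t$ --- shorter, exact, and not even requiring the hypothesis $t<((31-32\delta)^2+15)/48$. For (iv) the paper solves for $\alpha\ge r_1$, proves $\frac{dr_1}{ds}>0$, locates $r_1=1$ numerically at $t=0.31379\pm0.00001$, and concludes from $\delta/3>0.3138$; your substitution $\gamma_0=\frac{2A}{3B}$ with $B-A=w^2$, $w=2(1-\delta)$, turns the equivalent inequality $12\gamma_0^2-15\gamma_0+5<t$ into the cubic condition $Q(B)=B^3-2wB^2+2w^2B-16w^4>0$, and monotonicity of $Q$ and of $B(t)$ reduces everything to $Q\bigl(\tfrac32 w\bigr)=w^3\bigl(\tfrac{15}{8}-16w\bigr)>0$, i.e.\ $\delta>\frac{241}{256}=0.94140625$. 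I checked the key identities (that $16\bigl(16(2\delta-1)^2+45-60\delta\bigr)=(31-32\delta)^2+15$, so the hypothesis on $t$ is exactly $\gamma_0<5/8$, making the coefficient of $\alpha^2$ positive; and that $3t=B-2w+1$ yields the collapse to $Q(B)$), and they are all correct, as is your handling of the downward perturbation $\gamma=\gamma_0-\epsilon$ with $\epsilon$ depending on $t$, which is consistent with how the lemma is invoked. Your version buys an exact threshold explaining where the constant $0.9415$ comes from, in place of the paper's numerics with margins of order $10^{-5}$.
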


\begin{proof}

Before starting, we write $s=3+3t-4\delta$ and note that $s\geq3(1-\delta)>0.$ Then $\gamma=\frac{2}{3}-\frac{8(1-\delta)^2}{3s}-\epsilon.$

(i) We can compute $\frac{d\gamma}{dt}=8(1-\delta)^2s^{-2}$ so $0<\frac{d\gamma}{dt}\leq8/9.$ Therefore to show that $11/18<\gamma<2t$ it suffices to check for $t=\delta/3.$ Then we may compute $0.6147>\gamma=\frac{8}{9}\delta-\frac{2}{9}-\epsilon>0.6146-\epsilon.$ Since $11/18<0.6112$ and $2t>0.6276$ we have $11/18<\gamma<2t$ for small $\epsilon.$

(ii) We have
$$
(6t-3\gamma)(2-3\gamma)-(3\gamma-4\delta+2)^2=6s\epsilon>0.
$$

Now suppose also that $t<t^*=((31-32\delta)^2+15)/48.$ Then we have $s=3+3t-4\delta<64(\delta-1)^2.$

(iii) Suppose that $\gamma\leq8/13.$ Let $g_1(\gamma)=\frac{1}{3}(325\gamma^2-400\gamma+124).$ Then we can compute
$$
\frac{d^2g_1(\gamma(t))}{dt^2}=\frac{8}{3}(1-\delta)^2s^{-4}(15600(1-\delta)^2-200s)+10400(1-\delta)^2s^{-3}\epsilon.
$$
Since $s<64(1-\delta)^2$ we have $\frac{d^2g_1(\gamma(t))}{dt^2}>0.$ Therefore $t-g_1(\gamma)$ is a concave function of $t.$ To show that the function is positive, it suffices to check the extreme values $t=\delta/3$ and $t=t'$, where $t'=0.3146\pm0.0001$ is the value of $t$ at which $\gamma=8/13.$ We have
$$
\delta/3-g_1(\gamma(\delta/3))=0.0060\pm0.0001+O(\epsilon)>0,
$$
and
$$
t'-g_1(8/13)=0.0069\pm0.0001+O(\epsilon)>0,
$$
for small $\epsilon$, as required.

(iv) $12\gamma^2-\frac{31}{2}\gamma+5=12(\gamma-2/3)(\gamma-5/8)>0.$ So the solution to $(12\gamma^2-\frac{31}{2}\gamma+5)\alpha^2+\frac{1}{2}\gamma\geq t$ is $\alpha\geq r_1=\frac{1}{2}s^{1/2}(1-\delta)^{-1}(64(1-\delta)^2-s)^{-1/2}(s-2(1-\delta))+h(\epsilon)$, where $h(\epsilon)\rightarrow 0$ as $\epsilon\rightarrow 0.$ We omit $h(\epsilon)$ since it does not affect the following computation.   We find
$$
\frac{dr_1}{ds}=\frac{1}{2}(1-\delta)^{-1}s^{-1/2}(64(1-\delta)^2-s)^{-3/2}[32(s+2\delta-2)(1-\delta)^2+s(64(1-\delta)^2-s)]>0.
$$
Numerical computation shows that $r_1=1$ for some $t$ in the range $0.31379\pm0.00001.$ If $t\geq0.3138$ we would have the contradiction $\alpha\geq r_1>1$, so we must have $t<0.3138.$ On the other hand, $t\geq\delta/3>0.3138.$ This contradiction completes the proof.
\end{proof}

\begin{theorem}
  Suppose $G$ is a graph on $n$ vertices with minimum degree $0.9415n$, where $n$ is sufficiently large. Then the largest $K_4$-free subgraph and the largest tripartite subgraph of $G$ have equal size. Therefore, $\delta_4\le0.9415.$
\end{theorem}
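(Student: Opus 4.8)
The plan is to reproduce the argument for the weak bound $\delta_4\le 49/52$ from the previous section, but with the crude deletion threshold $\tfrac{8}{13}$ replaced by the sharper, $t$-dependent value $\gamma$ supplied by Lemma~\ref{llllemma}, and with Lemma~\ref{condition of d}(3) now available to rule out the extra homomorphic type $F_4+K_1$ that the larger threshold admits. Suppose for contradiction that $P_3(G)<K_4f(G)$, let $H$ be a largest $K_4$-free subgraph of $G$, and write $e(H)=tn^2$. Exactly as before, $K_4f(G)>P_3(G)\ge\tfrac23 e(G)\ge\tfrac{\delta}{3}n^2$ (with $\delta=0.9415$) and trivially $e(H)<n^2$, so $t\in[\delta/3,1)$ and Lemma~\ref{llllemma} provides $\epsilon>0$ and $\gamma=\frac{6t-2(2\delta-1)^2}{9+9t-12\delta}-\epsilon$ with properties (i)--(iv). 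Now delete vertices of $H$ greedily: while the current graph on $k$ vertices has a vertex of degree at most $\gamma k$, delete it; let $\Gamma$ (on $v(\Gamma)=\alpha n$ vertices) be the final graph, so $\delta(\Gamma)>\gamma v(\Gamma)$. Bounding the number of removed edges gives
\[
 n^{-2}e(\Gamma)\ \ge\ t-\tfrac{\gamma}{2}(1-\alpha)\bigl(1+\alpha+\tfrac1n\bigr),
\]
which I abbreviate as $(\star)$. Property (i), $\gamma<2t$, makes $(\star)$ force $\alpha>0$; moreover Tur\'{a}n's theorem applied to the $K_4$-free graph $\Gamma$ gives $e(\Gamma)\le v(\Gamma)^2/3$, which together with $(\star)$ yields $\alpha^2\ge\tfrac{3(2t-\gamma)}{2-3\gamma}-O(1/n)>0$, so $\alpha$ is bounded away from $0$ and, since $\alpha\le1$, also $t\le\tfrac13+O(1/n)$. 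The degenerate case $\alpha=1$ (no deletions) is handled immediately: $\Gamma=H$ is homomorphic to some $F_d+K_1$, and either $d=1$, so $H$ is tripartite and $e(H)\le P_3(G)$, or $d\ge2$, so $\delta(H)/n\le 11/18<\gamma$ by Lemma~\ref{condition of d}; both contradict our assumptions.

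Next I would determine the homomorphic type of $\Gamma$. Since $\delta(\Gamma)>\gamma v(\Gamma)$ with $\gamma>11/18$ by (i), Theorem~\ref{homomorphic} shows $\Gamma$ is homomorphic to $F_d+K_1$ for some $d\le4$; choose $d$ minimal, so that Lemma~\ref{condition of d} applies to $\Gamma$. If $d=4$ then Lemma~\ref{condition of d}(3) forces $\delta(\Gamma)/v(\Gamma)\le 11/18$, contradicting (i). If $d=3$ then Lemma~\ref{condition of d}(2) gives $\gamma<8/13$, and (using that $325x^2-400x+124$ is decreasing on $(-\infty,8/13)$ together with $\delta(\Gamma)/v(\Gamma)>\gamma$) $e(\Gamma)\le\tfrac{\alpha^2}{3}(325\gamma^2-400\gamma+124)n^2$; feeding this into $(\star)$ gives $\tfrac{\alpha^2}{3}(325\gamma^2-400\gamma+124)+\tfrac{\gamma}{2}(1-\alpha^2)\ge t-O(1/n)$, whose left side (being linear in $\alpha^2\in[0,1]$) is at most $\max\{\tfrac{\gamma}{2},\ \tfrac13(325\gamma^2-400\gamma+124)\}$, which is strictly less than $t$ by combining (i) with (iii) (valid here since $\gamma<8/13$) --- a contradiction for $n$ large. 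If $d=2$ then Lemma~\ref{condition of d}(1) gives $e(\Gamma)\le(12\gamma^2-15\gamma+5)\alpha^2n^2$, so $(\star)$ yields $(12\gamma^2-\tfrac{31}{2}\gamma+5)\alpha^2+\tfrac{\gamma}{2}\ge t-O(1/n)$, directly contradicting (iv) for $n$ large. Hence $d=1$, i.e.\ $\Gamma$ is tripartite.

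With $\Gamma$ tripartite the argument finishes exactly as in the weak bound. The number of edges of $G$ incident to $V(G)\setminus V(\Gamma)$ is at least $(1-\alpha)n\cdot\delta n-\binom{(1-\alpha)n}{2}$, so Lemma~\ref{r-1 partite wiht more edges} produces a tripartite subgraph of $G$ with at least $e(\Gamma)+\tfrac23\bigl((1-\alpha)n\cdot\delta n-\binom{(1-\alpha)n}{2}\bigr)$ edges; since this is $\le P_3(G)<tn^2$ while $e(\Gamma)\ge tn^2-\tfrac{\gamma}{2}(1-\alpha)(1+\alpha+\tfrac1n)n^2$, dividing by $(1-\alpha)n$ and simplifying gives $\alpha<\tfrac{3\gamma-4\delta+2}{2-3\gamma}+O(1/n)$ (and if $3\gamma-4\delta+2\le0$ this already contradicts $\alpha$ being bounded away from $0$). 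Squaring and combining with $\alpha^2\ge\tfrac{3(2t-\gamma)}{2-3\gamma}-O(1/n)$ from the first paragraph gives $3(2t-\gamma)(2-3\gamma)<(3\gamma-4\delta+2)^2+O(1/n)$, which contradicts (ii) --- whose proof exhibits the fixed positive gap $6s\epsilon$ --- once $n$ is large. This settles the case $t<t^\ast:=((31-32\delta)^2+15)/48$; in the remaining range $t\in[t^\ast,\tfrac13+O(1/n)]$ one checks that $\gamma\ge 5/8-\epsilon>8/13$, so $d=3,4$ are excluded outright by the thresholds in Lemma~\ref{condition of d}(2),(3), while for $d=2$ either $\gamma\ge5/8$ contradicts Lemma~\ref{condition of d}(1) directly or $\gamma$ is just below $5/8$, in which case $12\gamma^2-15\gamma+5\le\tfrac5{16}+O(\epsilon)$ fed into $(\star)$ forces $t\le\tfrac5{16}+O(\epsilon)+O(1/n)<t^\ast$; and $d=1$ is unchanged since (ii) needs no restriction on $t$.

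I expect the only real difficulty to be quantitative bookkeeping rather than any conceptual step. Every contradiction above comes with only a tiny margin --- a fixed multiple of $\epsilon$ (the slack deliberately built into $\gamma$), or a small numerical gap such as the $\approx0.006$ appearing in the proof of Lemma~\ref{llllemma}(iii), or the gap $t^\ast-\tfrac5{16}$ --- so one must first fix $\epsilon$ small and then take $n$ large enough that the $O(1/n)$ discretisation errors coming from the vertex-deletion sums are absorbed. Verifying the parabola-monotonicity steps needed to pass from $\delta(\Gamma)/v(\Gamma)>\gamma$ to bounds phrased purely in terms of $\gamma$ requires similar care. Otherwise the skeleton is identical to that of the weak bound, and essentially all the new content is concentrated in Lemma~\ref{llllemma} and in the availability of Lemma~\ref{condition of d}(3).
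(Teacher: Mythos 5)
Your proposal is correct and follows essentially the same route as the paper: the same deletion process to obtain $\Gamma$ with $\delta(\Gamma)>\gamma v(\Gamma)$, the same case analysis on the homomorphic type $F_d+K_1$ via Theorem~\ref{homomorphic} and Lemma~\ref{condition of d}, and the same use of Lemma~\ref{llllemma}(i)--(iv) to kill each case. The only (harmless) organizational difference is how the range $t\ge((31-32\delta)^2+15)/48$ is dispatched --- the paper resets $\gamma=5/8$ so that Lemma~\ref{Kr-free r-1 partite} forces $d=1$ and then reuses property (ii), whereas you keep the $t$-dependent $\gamma$ and check numerically that $d\ge2$ is impossible there; both work.
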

\begin{proof}
Let $G$ be a graph on $n$ vertices with minimum degree $\delta n.$ Then $e(G)\geq\frac{1}{2}\delta n^2.$ Suppose that $P_3(G)<K_4f(G),$ we can derive a contradiction when $\delta=0.9415.$ This shows that $\delta_4\leq0.9415.$

Let $H$ be a $K_4$-free subgraph of $G$ with $e(H)=K_4f(G),$ and write $e(H)=tn^2.$ Since $K_4f(G)>P_3(G)\ge\frac{2}{3}e(G)\geq\frac{\delta n^2}{3},$ we have that $t>\frac{\delta}{3}.$ As we did in the proof of Theorem \ref{general upper bound}, we construct a sequence of graphs $H=H_n,H_{n-1},\ldots,$ where if $H_k$ has a vertex of degree less than or equal to $\gamma k$ we delete that vertex to obtain $H_{k-1}.$ Let $\Gamma$ be the final graph of this sequence and write $v(\Gamma)=\alpha n.$ Then $\Gamma$ is $K_4$-free with minimal degree $\delta(\Gamma)>\gamma v(\Gamma)$ and $e(\Gamma)\ge e(H)-\gamma(n+(n-1)+(n-2)+\ldots+(\alpha n+1))=tn^2-\frac{\gamma}{2}(n+\alpha n+1)(n-\alpha n),$ i.e.
  \begin{equation}\label{better upper bound of t}
    \beta\alpha^2:=n^{-2}e(\Gamma)\ge t-\frac{1}{2}\gamma(1-\alpha^2)-O(1/n),
  \end{equation}
  or equivalently $(2\beta-\gamma)\alpha^2\geq2t-\gamma-O(1/n).$ By Lemma \ref{llllemma} (i), $2t-\gamma>0$, so then $2\beta>\gamma$ for sufficiently large $n$, and
  $$
  \alpha^2\ge\frac{2t-\gamma}{2\beta-\gamma}.
  $$
  By Lemma \ref{llllemma} (i), $\gamma>11/18$, so by Theorem \ref{homomorphic}, $\Gamma$ is homomorphic to $F_d+K_1$ for some $d\le4.$ Choose $d$ so that $\Gamma$ is not homomorphic to $F_i+K_1$ for any $i<d.$ Now we focus on the evaluation of $d.$

Suppose that $d=1$, i.e. $\Gamma$ is 3-partite. In this case $e(\Gamma)\leq v(\Gamma)^2/3=\frac{\alpha^2n^2}{3}$ by Tur\'{a}n's theorem, i.e. $\beta\leq1/3.$ So $\gamma<2/3.$ The number of edges of $G$ incident to vertices in $V(G)\backslash V(\Gamma)$ is
  \begin{align*}
    m=&\sum_{v\in V(G)\backslash V(\Gamma)}d(v)-e(V(G)\backslash V(\Gamma))\geq(1-\alpha)n^2\delta-{(1-\alpha)n\choose 2}.
  \end{align*}
  Applying Lemma \ref{r-1 partite wiht more edges}, we have
  \begin{align*}
    t =& n^{-2}K_4f(G)>n^{-2}P_3(G)\ge n^{-2}(e(\Gamma)+\frac{2}{3}m) \\
      \geq& t-\frac{1}{2}\gamma(1-\alpha^2)+\frac{2}{3}[(1-\alpha)\delta-\frac{(1-\alpha)^2}{2}].
  \end{align*}
  Since $\gamma<2/3$, this gives that $\alpha<\frac{3\gamma-4\delta+2}{2-3\gamma}.$ Because $\alpha^2\ge\frac{2t-\gamma}{2\beta-\gamma},$ it follows that
  $$
  \frac{2t-\gamma}{2\beta-\gamma}<(\frac{3\gamma-4\delta+2}{2-3\gamma})^2,
  $$
  and since $\beta\leq1/3$, we have $3(2t-\gamma)(2-3\gamma)<(3\gamma-4\delta+2)^2.$ This contradicts Lemma \ref{llllemma} (ii), so this case leads to a contradiction. Note that if $t\geq((31-32\delta)^2+15)/48$, then we may choose $\gamma=5/8$ to satisfy inequalities (i) and (ii) of Lemma \ref{llllemma}, which immediately gives a contradiction, as by Lemma \ref{Kr-free r-1 partite} we know that $\Gamma$ can only be 3-partite. Therefore we can assume that
  $$
  t<((31-32\delta)^2+15)/48.
  $$

  For the case $d=4$, by Lemma \ref{condition of d} $\gamma\leq11/18$, which contradicts Lemma \ref{llllemma} (i). In the case $d=3$, we get $\beta\leq\frac{1}{3}(325\gamma^2-400\gamma+124)<t$, which again gives the contradiction $\alpha>1.$ Therefore we conclude that $d=2$, i.e. $\Gamma$ has $(F_2+K_1)$-type.

  By Lemma \ref{condition of d} we have $\beta\leq12\gamma^2-15\gamma+5.$ We can rewrite $\alpha^2\ge\frac{2t-\gamma}{2\beta-\gamma}$ as
  $$
  (12\gamma^2-\frac{31}{2}\gamma+5)\alpha^2+\frac{1}{2}\gamma\geq t.
  $$
  However, Lemma \ref{llllemma} (vi) shows the above inequality has no solution with $0\leq\alpha\leq1.$ This completes the proof.

 \end{proof}

\section{Concluding remarks}
In this paper, we generalize the problem of finding conditions on a graph $G$ such that the largest number of edges in a triangle-free subgraph equals the largest number of edges in a bipartite subgraph. We still focus on the minimal degree condition and derive general upper and lower bounds. The following conjecture proposed by Balogh, Keevash and Sudakov \cite{MR2274084} is still open.
\begin{conjecture}[\cite{MR2274084}]
  In any graph on $n$ vertices with minimum degree at least $(3/4 + o(1))n$ the largest triangle-free and the largest bipartite subgraphs have equal size.
\end{conjecture}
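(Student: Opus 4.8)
The plan is to push the degree-peeling plus homomorphism argument used above for $\delta_4$ down to the classical triangle-free versus bipartite case, replacing the Balogh--Keevash--Sudakov constant $0.791$ by $3/4$; since the matching lower bound $\delta_3\ge 3/4$ is already known, it suffices to prove $\delta_3\le 3/4$. So write $\delta=3/4+o(1)$ and let $G$ be an $n$-vertex graph with $\delta(G)\ge\delta n$; suppose for contradiction that $P_2(G)<K_3f(G)$. Let $H$ be a largest triangle-free subgraph of $G$ and put $e(H)=tn^2$. Since $K_3f(G)>P_2(G)\ge\frac{2}{3}e(G)\ge\frac{1}{3}\delta n^2$, we get $t>\delta/3=1/4-o(1)$, which is the quantity we will contradict.

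First I would peel off low-degree vertices of $H$: form $H=H_n,H_{n-1},\dots$, deleting from $H_k$ a vertex of degree at most $\gamma k$ whenever one exists, for a threshold $\gamma$ to be optimised in the open interval $(2/5,2/3)$. This produces a triangle-free graph $\Gamma$ on $v(\Gamma)=\alpha n$ vertices with $\delta(\Gamma)>\gamma v(\Gamma)$, and, exactly as in the derivation of (\ref{better upper bound of t}),
$$
n^{-2}e(\Gamma)\;\ge\;t-\frac{1}{2}\gamma(1-\alpha^2)-O(1/n).
$$
Since $\gamma>2/5$, the Andr\'asfai--Erd\H{o}s--S\'os theorem together with Jin's extension (the $r=2$ instances of the homomorphism theorems stated above) forces $\Gamma$ to be homomorphic to $F_d$ for some bounded $d$; choose $d$ minimal with this property, so that by the proposition preceding Lemma \ref{condition of d} the homomorphism is surjective onto $F_d$.

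Now the case analysis on $d$. For $d=1$, $\Gamma$ is bipartite; the number of edges of $G$ incident to $V(G)\setminus V(\Gamma)$ is at least $(1-\alpha)n^2\delta-\binom{(1-\alpha)n}{2}$, so Lemma \ref{r-1 partite wiht more edges} yields a bipartite subgraph of $G$ of size at least $e(\Gamma)+\frac{1}{2}m$; combining this with Tur\'an's bound $e(\Gamma)\le\alpha^2n^2/4$ and the displayed lower bound on $e(\Gamma)$, one extracts two competing inequalities on $\alpha$ that, for $\delta$ close to $3/4$, one wants to show are incompatible with $t>\delta/3$. For each $d\ge 2$ I would set up the weighted-vertex optimisation of Lemma \ref{min lemma}, but over $F_d$ itself (dropping the apex vertex and the $+K_1$): weighting the $3d-1$ parts of $\Gamma$ by their normalised sizes $x_i$ with $\sum x_i=\alpha$ and using $g_i\ge\gamma\alpha$ at each vertex, one should obtain an explicit quadratic bound $n^{-2}e(\Gamma)\le\psi_d(\gamma)\,\alpha^2$. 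Comparing $\psi_d(\gamma)\alpha^2$ with the lower bound $t-\frac{1}{2}\gamma(1-\alpha^2)$ and with $t>\delta/3$ should eliminate each such $d$, provided $\gamma$ was chosen large enough; the list of $d$ to check is finite since for fixed $\gamma$ Jin's theorem only leaves $d\le d_0(\gamma)$.

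The hard part --- and the reason this conjecture is still open --- lives in the $d=1$ case and the transition region just below it. With $\delta$ only $3/4$, the peeling threshold $\gamma$ one can afford (so as not to destroy more than $O((1-\alpha^2)\gamma)$ edges) is not large enough to make the bipartite-case inequalities close, while any smaller $\gamma$ re-admits the Andr\'asfai graphs $F_d$ for moderately large $d$, whose balanced blow-ups are precisely the tight configurations underlying the lower bound $\delta_3\ge 3/4$. I expect that a successful proof must strengthen two of the crude steps: the random placement of the peeled-off vertices in Lemma \ref{r-1 partite wiht more edges} should be upgraded to an argument exploiting how those vertices actually attach to the near-bipartite core $\Gamma$; and the Andr\'asfai--Erd\H{o}s--S\'os input should be promoted to a stability statement valid across the whole density interval $(2/5,3/4)$, certifying that a triangle-free $\Gamma$ with $\delta(\Gamma)\ge\gamma v(\Gamma)$ is close, in edit distance, to complete bipartite unless it is essentially a blow-up of a small $F_d$. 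Assembling these two ingredients and then checking a finite optimisation in $(\gamma,\alpha)$ is the route I would pursue.
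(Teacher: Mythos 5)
This statement is a conjecture, not a theorem of the paper: the paper records it (attributed to Balogh, Keevash and Sudakov) precisely because it is open, and offers no proof. Your proposal does not close that gap. What you describe is the peeling-plus-homomorphism scheme that the cited work uses to prove $\delta_3<0.791$ and that Sections 4--5 of this paper adapt to $r=4$; run with $\delta=3/4+o(1)$ it is known not to terminate in a contradiction, and you concede this yourself when you write that the $d=1$ case is ``the hard part --- and the reason this conjecture is still open.'' The two strengthenings you then call for --- replacing the random placement of the peeled vertices in Lemma \ref{r-1 partite wiht more edges} by an argument that uses how those vertices actually attach to the core $\Gamma$, and upgrading Andr\'asfai--Erd\H{o}s--S\'os/Jin to a stability statement over the whole interval --- are exactly the missing ideas, and neither is supplied. ``Assembling these two ingredients \dots is the route I would pursue'' is a statement of intent, not an argument, so nothing beyond the known bound $0.791$ is established.

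Two concrete points inside the sketch also need attention. First, if you insist on a peeling threshold $\gamma\in(2/5,2/3)$, then the Andr\'asfai--Erd\H{o}s--S\'os theorem already forces the core $\Gamma$ to be bipartite, so your entire case analysis over $d\ge2$ is vacuous and all the difficulty is concentrated in the $d=1$ branch, where the competing inequalities on $\alpha$ are in fact compatible at $\delta=3/4$; taking $\gamma$ below $2/5$ to avoid this re-admits the graphs $F_d$, whose blow-ups padded with a dominating independent set are the very constructions showing $\delta_3\ge3/4$, so the quadratic bounds $\psi_d(\gamma)\alpha^2$ you hope for cannot all be beaten. Second, the $o(1)$ in the hypothesis cannot be absorbed by ``optimising $\gamma$'': the obstruction is structural, not a matter of tightening constants in the existing inequalities. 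As written, the proposal is a (reasonable) research programme for an open problem, not a proof.
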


\bibliographystyle{abbrv}
\bibliography{noteforslrc}
\end{document}